\newcommand{\Bignorm}[1]{\Bigl\|#1\Bigr\|}   
\newcommand{\bignorm}[1]{\bigl\lVert#1\bigr\rVert}   
\newcommand{\bR}{\field{R}}        
\newcommand{\ceil}[1]{\ensuremath{\lceil #1 \rceil}}   
\def\Cessum{{\lower 1.5pt \hbox{\large C}}{-}\!\!\!\sum}
\def\CHI{\hbox{\raise .5ex \hbox{$\chi$}}}
\def\esssup{\operatornamewithlimits{ess\,sup}}
\newcommand{\field}[1]{\mathbb{#1}}   
\newcommand{\GLsp}{{\boldsymbol G\kern-0.1em\boldsymbol L}}
\def\Msp{{\boldsymbol M}}   
\def\mv1{\Msp_v^1}   
\newcommand{\norm}[1]{\lVert#1\rVert}   
\def\sabs{{{\raise 0.5pt \hbox{\footnotesize $|$}}}}
\newcommand{\sgn}{\operatorname{sgn}}   
\def\shah{\sqcup \hspace{-0.15em}\sqcup}   
\def\shah1{{\makebox[2.3ex][s]{$\sqcup$\hspace{-0.15em}\hfill $\sqcup$}}}   
\def\sinc{\operatorname{sinc}}   
\def\slb{{{\raise 0.5pt \hbox{\footnotesize $[$}}}}
\def\slcb{{{\raise 0.5pt \hbox{\footnotesize $\{$}}}}
\def\slp{{{\raise 0.5pt \hbox{\footnotesize $($}}}}   
\newcommand{\snorm}{{{\raise 0.5pt \hbox{\footnotesize $\|$}}}}   
\def\srb{{{\raise 0.5pt \hbox{\footnotesize $]$}}}}
\def\srcb{{{\raise 0.5pt \hbox{\footnotesize $\}$}}}}
\def\srp{{{\raise 0.5pt \hbox{\footnotesize $)$}}}}
\def\supp{\operatorname{supp}}   
\def\trace{{\operatorname{trace}}}   
\def\Zdst{{\Zst^d}}   
\def\Zst{{\mathbb Z}}
\newcommand{\textapprox}{\raisebox{0.5ex}{\texttildelow}}
\newcommand{\euler}{\mathrm{e}}
\newcommand{\imunit}{\mathrm{i}}
\newcommand{\e}[1]{\ensuremath{\euler^{2\pi \imunit#1}}}
\newcommand{\ip}[2]{\ensuremath{\left<#1,#2\right>}}
\newcommand{\abs}[1]{\ensuremath{\left| #1 \right| }}
\DeclareMathOperator{\cov}{Cov}
\DeclareMathOperator{\var}{Var}
\DeclareMathOperator{\mse}{MSE}
\newcommand{\newm}{g}
\newcommand{\Integer}{\mathbb{Z}}
\newcommand{\gridn}{{\mathcal{Q}_N}}
\newcommand{\sset}{{\Omega}}
\newcommand{\ssetgrids}{{\Omega^{\mathrm{grid}}}}
\newcommand{\popr}{\mathcal{X}}
\newcommand{\spd}{S}
\newcommand{\hspd}{\widehat{S}}
\newcommand{\hspdmt}{\widehat{S}^{\mathrm{mt}}}
\newcommand{\hspdr}{\widehat{S}^{\mathrm{pmt}}}
\newcommand{\caro}{{n_\Omega}}
\newcommand{\lambdak}{\lambda_k}
\newcommand{\mat}{T^{\Omega,W}}
\newcommand{\win}{\rho}
\newcommand{\randmat}{G}
\newcommand{\randm}{\widetilde{m}}
\newcommand{\hh}{\mathrm{h}}
\newcommand{\rr}{\mathrm{r}}
\newcommand{\pero}{n_{\partial \Omega}}
\newcommand{\transp}{\mathrm{T}}
\title{Multitaper estimation on arbitrary domains%
\thanks{Both authors contributed equally to this work.
\funding{J. L. R. gratefully acknowledges support from the Austrian Science Fund (FWF): P 29462-N35, from the WWTF grant INSIGHT (MA16-053).}}}
\author{
Joakim And\'en%
\thanks{Department of Mathematics, KTH Royal Institute of Technology, SE-100 44 Stockholm, Sweden and Center for Computational Mathematics, Flatiron Institute, 162 5th Avenue, New York, NY 10010, USA (\email{janden@kth.se}).}%
\and%
Jos\'e Luis Romero%
\thanks{Faculty of Mathematics, University of Vienna, Oskar-Morgenstern-Platz 1, A-1090 Vienna, Austria and Acoustics Research Institute, Austrian Academy of Sciences, Wohllebengasse 12-14, A-1040, Vienna, Austria (\email{jose.luis.romero@univie.ac.at}, \email{jlromero@kfs.oeaw.ac.at}).}%
}
\begin{document}

\maketitle

\begin{abstract}
Multitaper estimators have enjoyed significant success in estimating spectral densities from finite samples using as tapers Slepian functions defined on the acquisition domain.
Unfortunately, the numerical calculation of these Slepian tapers is only tractable for certain symmetric domains, such as rectangles or disks.
In addition, no performance bounds are currently available for the mean squared error of the spectral density estimate.
This situation is inadequate for applications such as cryo-electron microscopy, where noise models must be estimated from irregular domains with small sample sizes.
We show that the multitaper estimator only depends on the linear space spanned by the tapers.
As a result, Slepian tapers may be replaced by proxy tapers spanning the same subspace (validating the common practice of using partially converged solutions to the Slepian eigenproblem as tapers).
These proxies may consequently be calculated using standard numerical algorithms for block diagonalization.
We also prove a set of performance bounds for multitaper estimators on arbitrary domains.
The method is demonstrated on synthetic and experimental datasets from cryo-electron microscopy, where it reduces mean squared error by a factor of two or more compared to traditional methods.
\end{abstract}

\begin{keywords}
    spectral estimation, multitaper estimators, spatiospectral concentration, irregular domains, block eigendecomposition, cryo-electron microscopy
\end{keywords}

\begin{AMS}
62G05,      
62M15,      
62M40,      
33E10,      
15B05,      
15A18,      
42B10,      
94A12,      
65F15,      
92C55       
\end{AMS}

\section{Introduction}

Estimating the frequency content of a stochastic process is crucial to many data processing tasks.
For example, in several inverse problems, such as denoising, access to a good noise model is necessary for accurate reconstruction.
With other tasks, such as system identification, this frequency structure is itself the object of interest.

The frequency content of a stationary process $\popr$ over $\Integer^d$ is characterized by its \emph{spectral density} $\spd$, defined as the Fourier series of its autocovariance function \cite{pewa93}.
Spectral estimation is the task of recovering $\spd$ given one or more realizations of $\popr$ over a finite subset $\sset$ of $\Integer^d$.
The challenge in the estimation problem is two-fold:
(i) \emph{stochastic fluctuations} in $\popr$ introduce variance into estimates of $\spd$, and
(ii) \emph{spatial constraints} restrict us to using only samples from the domain $\sset$.
When many realizations are available, the first point may be addressed through ensemble averaging.
For several applications, however, such as geosciences, cosmology, and electron microscopy imaging, only a single realization is available, requiring accurate \emph{single-shot estimators}.

One approach to reducing error in single-shot spectral estimation is to divide the domain $\sset$ into disjoint subsets, compute spectral density estimates for each, and average the result.
In one dimension, with $\sset = \{0, \ldots, N-1\}$ for some $N > 0$, this is typically done by partitioning $\sset$ into $K$ blocks of $N/K$ samples each, computing a spectral estimate, such as a periodogram, for each subset, and averaging.
The $K$ parameter controls the trade-off between the two challenges raised previously: while a higher value of $K$ mitigates the stochastic fluctuations (reducing variance), each periodogram is computed from only $N/K$ samples, reducing their resolution (increasing bias) \cite{be83}.
One problem with this approach, however, is that it introduces artifacts due to the boundaries imposed by the partitioning.
The multitaper estimator, introduced by Thomson \cite{th82}, refines this method by instead computing periodograms over all of $\sset$, but with the samples first multiplied by a set of $K$ sequences called \emph{tapers}.
A favorable trade-off between bias and variance is obtained for specific sequences known as discrete prolate spheroidal sequences, or \emph{Slepian tapers} \cite{slpo61}.

Originally defined for $d = 1$, multitaper estimators naturally generalize to higher dimensions \cite{sl64}.
The Slepian tapers, however, are defined as solutions to an ill-posed eigenvalue problem.
Special geometries, including rectangular grids \cite[Chapter 2]{hola12} and domains involving other symmetries \cite{gr81,hola12,siwado11,sidawi06}, may be analyzed by means of commuting differential operators, yielding well-posed problems that allow for explicit calculation of the tapers.
There are other stable algorithms for calculating Slepian tapers, but these are defined only for specific domains \cite{laho12, lash16, hola17, kash18} or for certain modified tapers \cite{risi95, refs1, refs2, simons2003spatiospectral}.

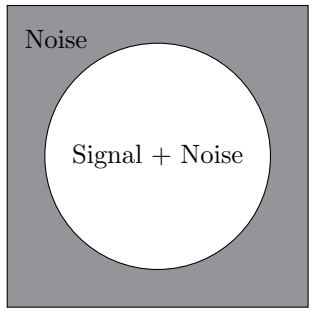
\begin{figure}
\centering
\begin{tikzpicture}
\draw[black, fill=gray] (-2, -2) rectangle (2, 2);
\draw[black, fill=white] (0, 0) circle [radius=1.5cm];

\node at (0, 0) {\small Signal + Noise};
\node at (-1.35, 1.55) {\small Noise};
\end{tikzpicture}
\caption{
\label{fig:domain_ex}
The domain used to estimate the noise spectral density (gray), which excludes the central disk of the image (white).
By excluding the area occupied by the molecule projection, we obtain a more accurate estimate of the noise spectrum.
}
\end{figure}

At times, however, more general acquisition domains are needed, such as the complement of a disk (see Figure \ref{fig:domain_ex}).
This geometry arises in cryo-electron microscopy (cryo-EM), where noisy projection images contain signal on a central disk and clean samples of the noise process are found only outside of that disk.
In geosciences, a similar problem occurs when a physical quantity needs to be sampled on a subregion $\sset$ of the earth \cite{simons2000isostatic, sidawi06, simons2006spherical, siwado11, plsi14}.
For these general domains $\sset$, all available methods inherit the instability of the underlying eigenvalue problem \cite{siwado11}.
That being said, remarkable results have been obtained by direct application of standard eigenvalue solvers and using the resulting eigenvectors as tapers, despite these not being true Slepian tapers \cite{plsi14,sidawi06,siwado11,hasi12}.
The behavior and performance of these pseudo-multitaper estimators, however, is not well understood.

An alternative solution is to partition this irregular domain into smaller rectangular regions and apply the tensor Slepian multitaper estimator to those.
Since each subdomain is a rectangle, Slepian functions are readily calculated on these domains.
This strategy, however, lacks the simplicity of Thomson's multitaper estimator, and may typically only be implemented suboptimally.

In this work, we show that the multitaper estimator only depends on the linear span of the tapers used.
Consequently, we may calculate it with any set of tapers that span the same subspace as the Slepian tapers.
This explains the success of standard eigendecomposition algorithms applied to the ill-posed eigenvalue problem since a typical failure mode results in vectors with the same span as the true eigenvectors.
To ensure this fortuitous behavior, we propose replacing the standard eigendecomposition with a block eigendecomposition, extracting a basis for the subspace spanned by the leading $K$ eigenvectors.
This problem is well-posed, because of the large spectral gap between the first $K$ eigenvalues and the rest of the spectrum, which we validate.

Furthermore, we provide a mean squared error bound for the multitaper estimator on arbitrary domains.
This generalizes previous performance bounds for the one-dimensional case \cite{abro17}.
We validate these bounds numerically, showing how they correctly predict the error decay as a function of the acquisition geometry.

The proposed method is evaluated on synthetic data, where it is shown to perform comparably to the Slepian multitaper method for rectangular domains.
However, we also show that the method performs equally well on the complement of a disk.
We also evaluate the method on cryo-EM images, both synthesized and from experimental datasets, where we also achieve good performance compared to previous approaches.

Section \ref{sec_problem} introduces the spectral estimation problem for general domains.
The multitaper spectral estimator is introduced in Section \ref{sec_mt}.
Section \ref{sec_analysis} provides a bound for the mean squared error of the multitaper estimator for arbitrary domains.
We show that the estimator only depends on the linear span of the tapers in Section \ref{sec_rt} and use this to provide an implementation using proxy tapers.
Finally, Section \ref{sec_nums} illustrates the performance of the proposed estimator using numerical experiments.%
\footnote{Python code for reproducing the results of this work may be found at \url{https://github.com/janden/pmte}}

\paragraph{Notation}
In the following, $:=$ specifies the definition of a quantity.
For a vector $x \in \mathbb{R}^d$, we let $\abs{x} := \big(\sum_{k=1}^d \abs{x_k}^2\big)^{1/2}$ be its Euclidean norm.
The $C^2$-norm of a twice continuously differentiable function $f: \mathbb{R}^d \to \mathbb{C}$ is
\begin{align*}
\norm{f}_{C^2} := \max \left\{\sup_{x\in\mathbb{R}^d}\abs{f(x)}, \sup_{x\in\mathbb{R}^d}\abs{\partial_{x_j} f(x)},
\sup_{x\in\mathbb{R}^d}\abs{\partial_{x_j}\partial_{x_{j'}} f(x)}: j,j'=1,\ldots,d \right\},
\end{align*}
whenever finite.
We say that $f$ is \emph{1-periodic} if $f(x+k)=f(x)$ for all $k \in \mathbb{Z}^d$. The indicator function $1_\sset: S \to \mathbb{R}$ of a subset $\sset \subset S$ is defined as
\begin{equation}
1_\sset(x) := \left\{ \begin{array}{ll} 1, & \mbox{if}~x\in\sset \\ 0, & \mbox{otherwise.} \end{array} \right.
\end{equation}
The inner product between two vectors $q$ and $q'$ in $\mathbb{R}^d$ is given by $\ip{u}{v'} = \sum_{j=1}^d u_j v'_j$.
Finally, $\mathbb{E}(X)$ denotes the expected value of the random variable $X$ and $\ceil{x}$ denotes the smallest integer $k$ such that $k \ge x$.

\section{Spectral estimation on irregular domains}
\label{sec_problem}
\mbox{}
Let us consider a real-valued Gaussian, zero-mean, stationary process $\popr$ defined on an infinite grid $\mathbb{Z}^d$.
Our goal is to estimate the covariance matrix
\begin{align*}
\cov[q,q'] = \mathbb{E} \left\{ \popr[q] \popr[q'] \right\} \qquad q,q' \in \Zdst.
\end{align*}
Since $\popr$ is stationary, $\cov[q,q']$ only depends on the difference $q-q'$.
We therefore rewrite the matrix in terms of the autocovariance function $r$, giving
\begin{align*}
\cov[q,q'] = r[q-q'] \qquad q,q' \in \Zdst.
\end{align*}
The covariance information is equivalently encoded in the \emph{spectral density}
\begin{align*}
\spd(\xi) = \sum_{q \in \Zdst} r[q] \e{\ip{q}{\xi}}, \qquad \xi \in [-1/2,1/2]^d.
\end{align*}
Given one or several realizations of $\popr$ on a subset $\sset \subseteq \mathbb{Z}^d$ of cardinality $\caro$, we wish to estimate $\spd$.
This is known as the \emph{spectral estimation problem} \cite{pewa93}.

\begin{figure}[t]
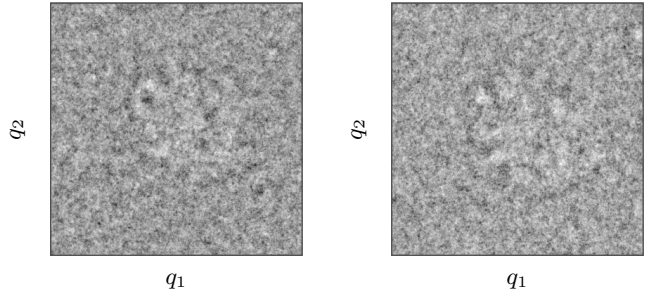

\centering
\begin{subfigure}{.25\textwidth}
\centering
\input{gplt_scripts/cryo_exp_sig_noise1_gplt}
\end{subfigure}
\begin{subfigure}{.25\textwidth}
\centering
\input{gplt_scripts/cryo_exp_sig_noise2_gplt}
\end{subfigure}
\caption{%
\label{fig:cryo_ex}
Two images from the EMPIAR-10028 cryo-electron microscopy dataset \cite{rib80s} as functions of the spatial position $(q_1, q_2)$.
These are tomographic projections of a ribosome, frozen in a thin sheet of vitreous ice and imaged using a transmission electron microscope.
The low electron dose results in very noisy images (the actual ribosome is in the center of each image).
}
\end{figure}

An instance of this problem arises in imaging.
Suppose that an image is defined on a grid \[\gridn=\{0,\ldots,N-1\}^2\] of pixels, and $\popr[q]$ represents additive noise at the pixel $q \in \gridn$.
A good estimate of the noise distribution is then needed for denoising and other tasks.
For example, in single-particle cryo-EM, molecules are imaged by freezing them in a thin layer of ice and recording their tomographic projections using an electron microscope \cite{fr06}.
To reduce specimen damage, electron dose is kept low, resulting in exceptionally noisy images, as shown in Figure \ref{fig:cryo_ex}.
The projection of the molecule is expected to lie in the center of the image, so pure noise samples are only found outside of a central disk.
A reasonable domain for noise estimation is therefore the complement of that disk
\begin{align}
\label{eq_sset_def}
\sset := \left\{q \in \gridn : \abs{q-(N/2,N/2)} > R \right\},
\end{align}
as illustrated in Figure \ref{fig:domain_ex}.

Another instance is found in geosciences, where $\popr$ is a physical quantity on an approximately flat portion of the earth, and measurements are only available on a subregion $\sset$, corresponding, for example, to a continent \cite{siwado11}.
A more sophisticated model replaces the Cartesian grid for a grid on the sphere \cite{sidawi06,plsi14}.

For both of these applications, we only have access to a single realization of the process whose spectral density we wish to estimate.
In cryo-EM, this is due to changing experimental conditions between projection images, such as non-stationary optical parameters or variation in ice thickness, while in geosciences, only one realization (i.e., one planet) exists.
We therefore require a \emph{single-shot estimator} of \spd.
Instead of averaging estimates from several realizations of $\popr$, a single-shot estimator only takes a single realization as input.
The total number of samples is therefore $\caro$.
This low sample size often results in high variance, so we need a method which regularizes this estimate.
The aim of the present work is to study the performance of one such method, the multitaper estimator, and provide an explicit implementation.

\section{Multitaper estimators}
\label{sec_mt}

The inherent variability of $\popr$ induces error into any estimate of its spectral density $\spd$.
With access to only a single realization, ensemble averaging cannot be used to reduce the error.
Instead, we must impose some constraint on the estimate.
This often involves post-processing of the spectral density estimate by smoothing or fitting parametric models \cite{brda91,pewa93}.

The multitaper spectral estimator provides another solution to this challenge \cite{th82}.
Initially introduced for one-dimensional signals, we present it here for signals of arbitrary dimension.
Let $m \in \ell^2(\mathbb{Z}^d)$ be a function of unit $\ell^2$-norm supported on $\sset$, that is, satisfying
\begin{align}
m[q]=0, \qquad \mbox{if }q \notin \sset.
\end{align}
We now define the \emph{tapered periodogram} of $\popr$ with taper $m$ as
\begin{align}
\label{eq_mt}
\hspd_{m}(\xi) := \abs{\sum_{q \in \sset} m[q] \popr[q] \e{\ip{q}{\xi}}}^2, \qquad \xi \in [-1/2,1/2]^d.
\end{align}
Given an orthonormal family $m_0, \ldots, m_{K-1} \in \ell^2(\mathbb{Z}^d)$ of tapers supported on $\sset$, we define the corresponding multitaper estimator as the average
\begin{align}
\hspdmt(\xi) := \frac{1}{K}\sum_{k=0}^{K-1} \hspd_{m_k}(\xi) \qquad \xi \in [-1/2,1/2]^d.
\end{align}
The error of $\hspdmt$ in estimating $\spd$ depends on the choice of tapers $m_0, \ldots, m_{K-1}$.

The tapers of the traditional multitaper estimator are defined using the following \emph{spectral concentration problem}:
\begin{equation}
\label{eq_spec_problem}
\begin{aligned}
&\mbox{maximize } \int_{[-W/2,W/2]^d}
\abs{\sum_{q \in \sset} m[q] \e{\ip{q}{\xi}}}^2 d\xi,
\\
&\qquad \mbox{subject to:} \sum_{q \in \sset} \abs{m[q]}^2 = 1,
\mbox{ and }
\supp(m) \subseteq \sset,
\end{aligned}
\end{equation}
where
\begin{equation}
\label{eq_W}
W = (K/\caro)^{1/d},
\end{equation}
and $\caro$ is the number of elements in $\sset$.
The parameter $W$ is known as the \emph{bandwidth} of the estimator.
It is chosen so that
\begin{align}
\label{eq_KW}
K=\ceil{\caro \times W^d},
\end{align}
and controls the amount of smoothness imposed on the estimate $\hspdmt$.
More concretely, the spectral concentration problem \eqref{eq_spec_problem} for $d = 2$ reduces to
\begin{equation}
\begin{aligned}
&\mbox{maximize } \int_{[-W/2,W/2]^2}
\abs{\sum_{(q_1, q_2) \in \sset} m[q_1, q_2] \e{(q_1\xi_1 + q_2\xi_2)}}^2 d\xi_1 d\xi_2,
\\
&\qquad \mbox{subject to:} \sum_{(q_1, q_2) \in \sset} \abs{m[q_1,q_2]}^2 = 1,
\mbox{ and }
\supp(m) \subseteq \sset,
\end{aligned}
\end{equation}
with $W = \sqrt{K/\caro}$.

The Slepian tapers $m_0, \ldots, m_{K-1}$ are defined as the set of mutually orthogonal solutions to \eqref{eq_spec_problem}.
Specifically, the first taper $m_0$ is the solution to \eqref{eq_spec_problem}, while $m_1$ is the solution to \eqref{eq_spec_problem} with the constraint that $m_1 \perp m_0$, and so on.
In general, $m_j$ is the solution to \eqref{eq_spec_problem} subject to $m_j \perp m_0, \ldots, m_j \perp m_{j-1}$.

Alternatively, \eqref{eq_spec_problem} may be formulated as the maximization of the quadratic form corresponding to the truncated $d$-Toeplitz matrix
\begin{align}
\label{eq_mat}
\mat[q,q'] = \left\{ \begin{array}{ll} W^d \,\sinc \big(W(q-q')\big), & \mbox{if}~q, q' \in \sset, \\ 0, & \mbox{otherwise}, \end{array} \right.
\end{align}
where $q, q' \in \mathbb{Z}^d$ and $\sinc$ is the $d$-dimensional normalized sinc function
\begin{align}
\label{eq_sinc}
\sinc_d(u) := \prod_{k=1}^d \frac{\sin(\pi u_k)}{\pi u_k}, \qquad u = (u_1, \ldots, u_d) \in \mathbb{Z}^d.
\end{align}
The matrix $\mat$ is indexed by vectors $q, q' \in \mathbb{Z}^d$, which means that it maps the space of $d$-dimensional arrays to itself.
This is done by identifying a $d$-dimensional array with a one-dimensional array through lexicographical ordering of indices.
The Slepian tapers $m_0, \ldots, m_{K-1}$ are thus the top $K$ eigenvectors of $\mat$, satisfying
\begin{align}
\label{eq_eig}
\mat m_k = \lambdak m_k, \qquad k \in \{0, \ldots, K-1\},
\end{align}
where $\lambda_0 \ge \lambda_2 \ge \ldots \ge \lambda_{K-1}$.
For $d = 2$, we may substitute \eqref{eq_mat} and \eqref{eq_sinc} into \eqref{eq_eig}, obtaining
\begin{align}
\sum_{(q'_1, q'_2) \in \sset} \frac{\sin(\pi W (q_1 - q'_1)) \sin(\pi W (q_2 - q'_2))}{\pi^2(q_1 - q'_1)(q_2-q'_2)} m_k[q'_1, q'_2] = \lambdak m_k[q_1, q_2], \qquad (q_1, q_2) \in \sset,
\end{align}
and $m_k[q_1, q_2] = 0$ for $(q_1, q_2) \notin \sset$.

\begin{figure}[t]
\centering
\input{gplt_scripts/spectrum_gplt}
\caption{Eigenvalues of $\mat$ for $d = 1$, $K = 7$, and $\sset = \{1, \ldots, 32\}$.}
\label{fig:eig}
\end{figure}

Unfortunately, serious numerical difficulties arise when attempting solve \eqref{eq_eig} numerically.
Indeed, the first \textapprox $K$ eigenvalues form a \emph{plateau profile}, all clustering around $1$ (see Figure \ref{fig:eig}), resulting in small spectral gaps between successive eigenvalues.
For increasing $K$, small spectral gaps result in an ill-posed eigenvector problem \cite{st73,golub-vanloan}, making direct calculation of $m_0, \ldots, m_{K-1}$ in finite precision very challenging.

Under certain circumstances, $\mat$ may be replaced with a commuting differential operator whose spectrum does not exhibit the sample plateau.
This is the case in one dimension when $\sset = \{0, \ldots, N-1\}$, yielding a well-posed eigenvector problem for the Slepian sequences \cite[Chapter 2]{hola12}.
For $d > 1$, we may similarly take $\sset$ as a subgrid of $\Zdst$, that is, $\sset = \{0, \ldots, N-1\}^d$.
In this case, Slepian tapers are tensor products of one-dimensional Slepian sequences \cite{ha17}.  

The existence of a commuting differential operator was referred to as a ``lucky accident'' by Slepian \cite{sl83}.
Similar devices in two dimensions are only available for special cases involving radial symmetries \cite{gr81,hola12,siwado11} or for polar caps in spherical geometry \cite{sidawi06}.
For non-symmetric domains, there are no adequate commuting differential operators \cite{MR847527, MR627121, MR745089}.
Other stable numerical strategies exist only for particular domains \cite{laho12,lash16, hola17, kash18} or for modified tapers that have analytic expressions \cite{risi95}.
For more general domains \emph{no useful symmetries seem to be available}.
Consequently, the calculation of the Slepian tapers for multitaper estimators on irregular domains is affected by numerical instability.
As a consequence of the analysis in this work, however, such instabilities do not preclude a stable implementation of the multitaper estimator.
Indeed, even if the calculation of the Slepian tapers is severely ill-posed, effective numerical proxies are available (see Section \ref{sec_rt}).

Multitaper estimators on irregular domains have been studied previously, notably by Bronez, who referred to the problem of ``irregularly sampling of multidimensional processes'' (referring to the geometry of the set of available samples) and named the corresponding tapers ``generalized prolate spheroidal sequences'' \cite{bronez1988spectral}.
More recently, multitaper estimators associated with irregular domains, including the more challenging setting of spherical geometries, have been instrumental in geosciences and climate analysis \cite{hola12,hasi12}.
Irregular spectra are also relevant in the one-dimensional setting, such as in the field of cognitive radio \cite{ha05}, where the opportunistic occupation of transmission frequencies leads to complex geometries that can be leveraged through carefully designed irregular sampling patterns \cite{coel14}.
In the absence of a commuting operator, practitioners often calculate the tapers by a direct eigenvalue decomposition of $\mat$, an operation that is admittedly unstable but effective in practice \cite{plsi14,sidawi06,siwado11,hasi12}.
A potential reason is that, even when the eigendecomposition fails, it may still yield vectors with the same span as the eigenvectors, which, as shown below in Proposition \ref{prop_span}, perform the same for multitaper estimation (see Section \ref{sec_rt}).

\section{Analysis of multitaper estimator}
\label{sec_analysis}

Let us now consider the performance of the multitaper estimator $\hspdmt$.
Previous analysis for the one-dimensional case \cite{abro17} relies on the following aggregated measure for the spectral resolution of the tapers, known as the \emph{accumulated spectral window}:
\begin{align}
\label{eq_thom_sw}
\win(\xi) := \frac{1}{K} \sum_{k=0}^{K-1} \abs{\sum_{q \in \sset} m_k[q] \e{\ip{q}{\xi}}}^2 \qquad \xi \in [-1/2,1/2]^d.
\end{align}
This window determines the smoothness imposed on $\hspdmt$ and therefore controls the bias, variance, and consequently mean squared error (MSE) of the estimator.
Indeed, for a multitaper estimator based on Slepian tapers, we have the following estimate, which is a minor extension of a result from Lii and Rosenblatt \cite{liro08} (the proof is provided in Appendix \ref{sec_estimates}).
\begin{proposition}
\label{prop_mt_general}
Suppose that the spectral density $\spd$ of $\popr$ is a $1$-periodic bounded function.
If $\win$ is defined from the Slepian tapers by \eqref{eq_thom_sw}, the multitaper estimator $\hspdmt(\xi)$ satisfies
\begin{align*}
&\var \left\{ \hspdmt(\xi) \right\} := \mathbb{E} \left\{ \abs{ \hspdmt(\xi) - \mathbb{E}\left\{\hspdmt(\xi)\right\} }^2 \right\} \le \frac{C}{K} \sup_{\xi \in [-1/2,1/2]^d} S(\xi)^2
\end{align*}
for some constant $C \ge 0$, and
\begin{align*}
&\mathrm{Bias} \left\{\hspdmt(\xi) \right\} := \mathbb{E} \left\{ \abs{\hspdmt(\xi)-\spd(\xi)} \right\} = \abs{\spd(\xi)-\spd*\win(\xi)}.
\end{align*}
\end{proposition}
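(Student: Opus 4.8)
The plan is to treat the two assertions separately, deriving both from explicit second- and fourth-order moment computations for the Gaussian process $\popr$. Throughout, fix $\xi$ and abbreviate $Y_k := \sum_{q \in \sset} m_k[q]\popr[q]\e{q\xi}$, a finite sum since $\sset$ is finite, so that $\hspd_{m_k}(\xi) = \abs{Y_k}^2$ and $\hspdmt(\xi) = \frac1K\sum_{k=1}^K \abs{Y_k}^2$. I would repeatedly use the Fourier representation $r[q-q'] = \int_{[-1/2,1/2]^d}\spd(\eta)\ee{(q-q')\eta}\,d\eta$ of the autocovariance obtained by inverting $\spd(\xi)=\sum_q r[q]\e{q\xi}$, together with the abbreviation $U_k(\zeta) := \sum_{q\in\sset}m_k[q]\e{q\zeta}$ for the Fourier series of each taper.

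For the bias, the only real content is to evaluate $\mathbb{E}\{\hspdmt(\xi)\}$. Expanding $\abs{Y_k}^2 = \sum_{q,q'}m_k[q]\overline{m_k[q']}\popr[q]\popr[q']\e{(q-q')\xi}$, taking expectations via $\mathbb{E}\{\popr[q]\popr[q']\} = r[q-q']$, and inserting the Fourier representation of $r$, the double sum factors and yields $\mathbb{E}\{\abs{Y_k}^2\} = \int \spd(\eta)\,\abs{U_k(\xi-\eta)}^2\,d\eta$. Averaging over $k$ and recognizing $\frac1K\sum_k\abs{U_k(\cdot)}^2 = \win$ from \eqref{eq_thom_sw}, this is exactly $(\spd*\win)(\xi)$. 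Hence $\mathbb{E}\{\hspdmt(\xi)\} = \spd*\win(\xi)$, which yields the claimed bias expression $\abs{\spd(\xi)-\spd*\win(\xi)}$.

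For the variance, write $\var\{\hspdmt(\xi)\} = \frac{1}{K^2}\sum_{k,k'=1}^K \cov\big(\abs{Y_k}^2,\abs{Y_{k'}}^2\big)$. Since $\popr$ is real, zero-mean, and Gaussian, the $Y_k$ are jointly (complex-linear) Gaussian, and I would apply Isserlis' theorem to the four real Gaussian factors making up $\abs{Y_k}^2\abs{Y_{k'}}^2 = Y_k\overline{Y_k}Y_{k'}\overline{Y_{k'}}$. Of the three Wick pairings, the fully separated one reproduces $\mathbb{E}\{\abs{Y_k}^2\}\mathbb{E}\{\abs{Y_{k'}}^2\}$ and cancels against the product of means; using that $r$ is real and symmetric, the remaining two pairings leave exactly $\cov(\abs{Y_k}^2,\abs{Y_{k'}}^2) = \abs{C_{kk'}}^2 + \abs{D_{kk'}}^2$, where $C_{kk'} := \mathbb{E}\{Y_k\overline{Y_{k'}}\}$ and $D_{kk'} := \mathbb{E}\{Y_kY_{k'}\}$. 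As in the bias step, inserting the Fourier representation of $r$ gives $C_{kk'} = \int \spd(\eta)U_k(\xi-\eta)\overline{U_{k'}(\xi-\eta)}\,d\eta$ and $D_{kk'} = \int \spd(\eta)U_k(\xi-\eta)U_{k'}(\xi+\eta)\,d\eta$.

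It then remains to bound the Frobenius norms $\sum_{k,k'}\abs{C_{kk'}}^2 = \|C\|_F^2$ and $\sum_{k,k'}\abs{D_{kk'}}^2 = \|D\|_F^2$ of the $K\times K$ matrices $C,D$. The key estimate is that both have operator norm at most $\norm{\spd}_\infty$: for unit vectors $u,w\in\mathbb{C}^K$, the bilinear expression $\sum_{k,k'}\overline{u_k}D_{kk'}w_{k'}$ equals $\int\spd(\eta)F(\eta)G(\eta)\,d\eta$ with $F(\eta) = \sum_k\overline{u_k}U_k(\xi-\eta)$ and $G(\eta) = \sum_{k'}w_{k'}U_{k'}(\xi+\eta)$, so it is bounded by $\norm{\spd}_\infty\norm{F}_{L^2}\norm{G}_{L^2}$; after the measure-preserving shifts $\eta\mapsto\xi\mp\eta$, Parseval and the orthonormality of $\{m_k\}$ give $\norm{F}_{L^2} = \big\|\sum_k\overline{u_k}m_k\big\|_{\ell^2} = 1$ and likewise $\norm{G}_{L^2}=1$. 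The same computation, with $U_{k'}(\xi+\eta)$ replaced by $\overline{U_{k'}(\xi-\eta)}$, gives $\|C\|_{\mathrm{op}}\le\norm{\spd}_\infty$. Since a $K\times K$ matrix has at most $K$ singular values, $\|M\|_F^2\le K\|M\|_{\mathrm{op}}^2$, whence $\sum_{k,k'}\big(\abs{C_{kk'}}^2+\abs{D_{kk'}}^2\big)\le 2K\norm{\spd}_\infty^2$ and $\var\{\hspdmt(\xi)\}\le 2\norm{\spd}_\infty^2/K$. I expect the main obstacle to be the combinatorial bookkeeping in the Isserlis step: correctly tracking which Wick pairings survive and verifying, using the reality and symmetry of $r$, that the two surviving pairings assemble into $\abs{C_{kk'}}^2$ and $\abs{D_{kk'}}^2$ rather than into mismatched cross terms. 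Once the covariance is reduced to these two Gram-type matrices, the Parseval and orthonormality argument bounding their operator norms by $\norm{\spd}_\infty$ is the crucial, but clean, analytic input.
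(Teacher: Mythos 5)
Your proposal is correct. The bias computation is exactly the paper's own argument: both expand $\mathbb{E}\left\{ \hspd_{m_k}(\xi) \right\}$ via the autocovariance, obtain $\left( \abs{M_k}^2 * \spd \right)(\xi)$, and average over $k$ to get $\mathbb{E}\left\{ \hspdmt(\xi) \right\} = (\spd * \win)(\xi)$; note that, like the paper's own proof, you are implicitly reading the stated ``Bias'' identity as $\abs{\mathbb{E}\left\{ \hspdmt(\xi) \right\} - \spd(\xi)}$ --- the displayed definition with the expectation of an absolute value is evidently a typo, since the claimed equality can only hold with the absolute value outside the expectation, and that is what both you and the paper actually establish. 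Where you genuinely add content is the variance bound: the paper does not prove it at all, but delegates it to \cite[Theorem 2]{liro08} with the remark that ``only the orthogonality of the tapers is important.'' Your argument --- Isserlis' theorem reducing $\cov\big(\abs{Y_k}^2, \abs{Y_{k'}}^2\big)$ to $\abs{C_{kk'}}^2 + \abs{D_{kk'}}^2$, the Fourier representations of $C$ and $D$, the operator-norm bound $\norm{\spd}_\infty$ obtained from Cauchy--Schwarz, a measure-preserving shift, Parseval, and orthonormality of the $m_k$, and finally $\norm{\cdot}_F^2 \le K \norm{\cdot}_{\mathrm{op}}^2$ --- is a clean, self-contained instance of precisely the cited argument, and it makes the paper's remark quantitative: orthonormality enters only through the Parseval step, and you obtain the explicit constant $2\norm{\spd}_\infty^2/K$. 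The Wick bookkeeping you flag as the main risk is handled correctly: since $\popr$ is real, $\mathbb{E}\{\overline{Y_k}\,\overline{Y_{k'}}\} = \overline{D_{kk'}}$ and $\mathbb{E}\{\overline{Y_k}\, Y_{k'}\} = \overline{C_{kk'}}$, so the two surviving pairings do assemble into the two squared moduli rather than into mismatched cross terms, exactly as you claim.
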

Note that the above result can also be proved for an arbitrary set of tapers.
In \cite{abro17} it was shown that the spectral window $\win$ associated with Thomson's classical (that is, one-dimensional) multitaper estimator resembles a bump function localized in the interval $[-W/2,W/2]$ and provided concrete error estimates \cite{abro17}.
Such description of the spectral window, combined with Proposition \ref{prop_mt_general}, leads to concrete MSE bounds for the classical multitaper as a function of $K$, thus elaborating on the more qualitative analysis by Lii and Rosenblatt \cite{liro08}.
These types of estimates are also instrumental in the analysis of multitapering for slowly evolving spectral densities \cite{xidi18}.

As a first contribution, we extend the description of the spectral window $\rho$ to arbitrary dimension and
general acquisition domains. We let $\pero$ denote the \emph{digital perimeter} of 
$\sset$:
\begin{align*}
\pero = \sum_{q \in \Zdst} \sum_{j=1}^d \abs{1_\sset(q+e_j) - 1_\sset(q)},
\end{align*}
where $\{e_j:j=1,\ldots,d\}$ is the canonical basis of $\Zdst$.
In the following, we will assume that $W^{d-1} \pero \geq 1$ to avoid degenerate cases.

\begin{theorem}
\label{th_specwin}
Consider the spectral window $\win$ defined by the Slepian tapers $m_0, \ldots, m_{K-1}$ obtained from \eqref{eq_spec_problem}.
Assume that $W^{d-1} \pero \geq 1$, where $W$ and $K$ are related by \eqref{eq_W}.
Then
\begin{align}
\label{eq_con}
\int_{[-1/2,1/2]^d} \abs{\win(\xi) - \frac{1}{W^d} 1_{[-W/2,W/2]^d}(\xi)} d\xi
 \le C \frac{\pero W^{d-1}}{K} \left[ 1+ \log\left( \frac{\caro}{\pero} \right) \right],
\end{align}
for some constant $C \ge 0$.
\end{theorem}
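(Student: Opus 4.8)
The plan is to track everything through the spectrum of the concentration operator. Write $Q$ for the Fourier multiplier on $\ell^2(\Zdst)$ with symbol $1_{[-W/2,W/2]^d}$ and $P$ for the restriction to $\sset$, so that $\mat = PQP$ is self-adjoint with spectrum in $[0,1]$, and each Slepian eigenvalue is the concentration $\lambdak = \int_{[-W/2,W/2]^d}\abs{\widehat{m_k}(\xi)}^2\,d\xi$, where $\widehat{m_k}(\xi):=\sum_{q\in\sset}m_k[q]\e{q\xi}$. Completing $m_1,\ldots,m_K$ to an orthonormal eigenbasis $m_1,\ldots,m_\caro$ of $\ell^2(\sset)$, both $\win$ and the target $R:=W^{-d}1_{[-W/2,W/2]^d}$ are probability densities on $[-1/2,1/2]^d$ (for $\win$ by Parseval). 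First I would introduce the auxiliary density $\Phi:=\tfrac1K\sum_{k=1}^{\caro}\lambdak\abs{\widehat{m_k}}^2$, whose spectral expansion yields the closed form $\Phi(\xi)=\tfrac1K\int_{[-W/2,W/2]^d}\abs{D_\sset(\xi+\eta)}^2\,d\eta$ with $D_\sset:=\sum_{q\in\sset}\e{q\,\cdot}$, and split $\norm{\win-R}_1\le\norm{\win-\Phi}_1+\norm{\Phi-R}_1$.

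For the first term, since $\win$ uses only the top $K$ eigenvectors while $\Phi$ uses all of them and each $\abs{\widehat{m_k}}^2$ has unit integral,
\begin{align*}
\norm{\win-\Phi}_1\le\frac1K\Big[\sum_{k=1}^K(1-\lambdak)+\sum_{k>K}\lambdak\Big].
\end{align*}
The bracket is the $\ell^1$-distance from the eigenvalue sequence to the ideal step profile with $K$ ones. Using $\min(\lambdak,1-\lambdak)\le 2\lambdak(1-\lambdak)$, and comparing the threshold $K=\ceil{\caro W^d}$ with the index $K_{1/2}$ where the sorted eigenvalues drop below $1/2$ (both lie within $\sum_k\lambdak(1-\lambdak)+1$ of $\Tr\mat=\caro W^d$, using $0\le K-\caro W^d<1$), this is $\lesssim\tfrac1K\big[\sum_k\lambdak(1-\lambdak)+1\big]$. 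The governing identity is then
\begin{align*}
\sum_k\lambdak(1-\lambdak)=\Tr(\mat-\mat^2)=W^{2d}\!\!\sum_{q\in\sset,\,q'\notin\sset}\!\!\sinc^2\big(W(q-q')\big),
\end{align*}
from $\Tr\mat=W^d\caro$ and the Parseval identity $\sum_n W^d\sinc^2(Wn)=1$.

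For the second term I would show that $\Phi$ is a boundary-smoothed version of $R$. After discarding the harmless mismatch between $\caro/K$ and $W^{-d}$ (of size $\lesssim 1/K$) and identifying $R-\Phi$ with the mass of $\abs{D_\sset}^2$ outside the translated cube $\xi+[-W/2,W/2]^d$, the bound that the volume of the symmetric difference of the cube with its $\zeta$-translate is $\le W^{d-1}\sum_j\min(\abs{\zeta_j},W)$ gives
\begin{align*}
\norm{\Phi-R}_1\lesssim\frac1K+\frac{W^{d-1}}{K}\sum_{j=1}^d\int_{[-1/2,1/2]^d}\min(\abs{\zeta_j},W)\,\abs{D_\sset(\zeta)}^2\,d\zeta.
\end{align*}
Passing to the autocorrelation $c_\sset[n]=\#\{q\in\sset:q+n\in\sset\}$ by Parseval and using the explicit coefficient $\int_{-1/2}^{1/2}\min(\abs{t},W)\e{mt}\,dt=-W^2\sinc^2(Wm)$ for $m\neq0$, each integral again becomes a boundary-pair sum $W^2\sum_{m\ne0}\sinc^2(Wm)\,\#\{q\in\sset:q+me_j\notin\sset\}$.

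The hard part, and the crux of the theorem, is to bound $\Tr(\mat-\mat^2)$—and the analogous sums from the second term—by $\pero W^{d-1}[1+\log(\caro/\pero)]$. The key combinatorial input is $\#\{q\in\sset:q+m\notin\sset\}\le\min(\sum_j\abs{m_j}p_j,\caro)$, where $p_j:=\sum_q\abs{1_\sset(q+e_j)-1_\sset(q)}$ satisfies $\sum_j p_j=\pero$, obtained by telescoping $1_\sset(q)-1_\sset(q+m)$ along a monotone lattice path. Inserting this into $W^{2d}\sum_{q\in\sset,q'\notin\sset}\sinc^2(W(q-q'))$ and factoring the product sinc, the $d-1$ tangential sums each contribute $\sum_n\sinc^2(Wn)=1/W$, while the transverse sum $\sum_m\sinc^2(Wm)\min(\abs{m}p_j,\caro)\lesssim W^{-2}p_j[1+\log(\caro/p_j)]$ supplies the logarithm—arising from $\sum_{1\le s\lesssim\caro/p_j}1/s$, with cutoff set by the domain size. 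This yields $W^{d-1}\sum_j p_j[1+\log(\caro/p_j)]$, which a concavity argument for $x\mapsto x\log(\caro/x)$ consolidates into $W^{d-1}\pero[1+\log(\caro/\pero)]$ up to a dimensional constant; the second term's sums are estimated identically. Dividing by $K$ and absorbing the additive constants via the standing hypothesis $W^{d-1}\pero\ge1$ gives \eqref{eq_con}. I expect this final geometric estimate—securing the perimeter factor and the logarithm with the correct cutoff together—to be the main obstacle, the eigenvalue bookkeeping being comparatively routine.
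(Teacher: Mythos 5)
Your proposal is correct; every step I checked goes through (the closed form for $\Phi$, the comparison of $K$ with the half-height index $K_{1/2}$, the Fej\'er coefficient $-W^2\sinc^2(Wm)$, the counting bound, and the splitting of the sinc sums), and it reaches \eqref{eq_con} by a route that genuinely differs from the paper's at both ends. The paper never introduces your auxiliary window $\Phi=\tfrac1K\sum_{k=1}^{\caro}\lambdak\abs{\widehat{m_k}}^2$: it compares $\win$ directly with $\tfrac{\caro}{K}1_{[-W/2,W/2]^d}$, using the pointwise bound $\win\le\caro/K$ (the tapers sit inside an orthonormal basis of $\ell^2(\sset)$) to control both the inside and outside contributions by $1-\tfrac1K\sum_{k\le K}\lambdak$, and then swaps $\caro/K$ for $W^{-d}$ at a cost of $1/K$; this makes your entire second term (symmetric differences of cubes, Fourier coefficients of $\min(\abs{t},W)$) unnecessary. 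Where you compare $K$ with $K_{1/2}$, the paper instead uses the chain $\trace[\mat]-\trace[(\mat)^2]=\sum_k\lambdak(1-\lambdak)\ge K-\sum_{k\le K}\lambdak-1$ borrowed from \cite{abro17}; the two devices are interchangeable. The crux, $\trace[\mat]-\trace[(\mat)^2]\lesssim\pero W^{d-1}\left[1+\log(\caro/\pero)\right]$, is the paper's Proposition \ref{prop_tr}: there the trace defect is written as $W^d\norm{(1_\sset*a)1_\sset-1_\sset}_1$ with $a[q]=W^{-d}\abs{\hh[q]}^2$, the kernel is truncated at radius $L$, the discrete gradient inequality of Proposition \ref{prop_reg} is applied, and $L$ is optimized to $\caro/\pero$; your version instead inserts $\#\{q\in\sset:q+m\notin\sset\}\le\min\big(\sum_j\abs{m_j}p_j,\caro\big)$ into the boundary-pair sum and splits the one-dimensional sinc sums explicitly, so the cutoff $\caro/p_j$ and the logarithm emerge from the $\min$ rather than from an optimization over a free parameter. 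As for what each approach buys: the paper's front end is leaner, and its Propositions \ref{prop_reg} and \ref{prop_tr} are packaged as standalone estimates of independent interest (the paper notes they improve results in the numerical analysis of Fourier extensions), while your argument is more elementary and self-contained at the back end, gives a transparent interpretation of the eigenvalue-weighted window as $\tfrac1K\abs{D_\sset}^2*1_{[-W/2,W/2]^d}$, and in passing essentially proves the analogous bound for eigenvalue-weighted multitaper estimators, which the paper only asserts in a remark.
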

Related results in the context of the short-time Fourier transform can be found in \cite{abgrro16,abpero17}.
The proof of Theorem \ref{th_specwin} is postponed to Appendix \ref{sec_estimates}.

Combining Proposition \ref{prop_mt_general} and Theorem \ref{th_specwin}, we obtain
MSE bounds for the multitaper estimator on general domains $\sset$.
We also present a simplified expression for the bound, valid in the so-called {\em fine-scale regime}.
Here, we consider a class of acquisition domains $\sset$ for which there is a constant $C>0$ such that $\pero \le C \caro^{\frac{d-1}{d}}$.
An instance of this regime occurs, for example, if $\sset$ arises from increasingly fine-scale discretizations of a certain continuous subset of $\mathbb{R}^d$, where $C$ depends on the smoothness of the subset.

\begin{theorem}
\label{th_mse}
Suppose that the spectral density $\spd$ of $\popr$ is a $1$-periodic $C^2$ function.
Assume that $K \pero^{d/(d-1)} \geq \caro$.
Then there exists a constant $C \ge 0$, such that the multitaper estimator $\hspdmt$ with $K$ tapers satisfies the mean squared error bound
\begin{align}
\label{eq_mse}
\mse \left\{\hspdmt (\xi)\right\} &:= \mathbb{E} \left\{ \abs{\hspdmt(\xi) - \spd(\xi)}^2 \right\} \\
& \le C \norm{\spd}^2_{C^2}
\left(
\frac{K^{4/d}}{\caro^{4/d}} + \frac{\pero^2 }{\caro^{2-2/d}K^{2/d}} \left[ 1+ \log\left( \frac{\caro}{\pero} 
\right) \right]^2 + \frac{1}{K}
\right).
\end{align}
In particular, when $d=2$, and, in the fine-scale regime (where $\pero$ is of the order $\sqrt{\caro}$), the choice 
$K = \lceil \caro^{2/3} \rceil$ (or, equivalently, $W = \caro^{-1/6}$) gives
\begin{align}
\label{eq_mse_simple}
\mse \left\{\hspdmt (\xi)\right\} \le C \caro^{-2/3} \cdot \log^2\left( \caro \right) \cdot 
\norm{\spd}^2_{C^2}.
\end{align}
\end{theorem}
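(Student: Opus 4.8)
The plan is to treat Theorem \ref{th_mse} as an assembly of the variance bound of Proposition \ref{prop_mt_general}, the spectral-window concentration of Theorem \ref{th_specwin}, and one elementary smoothing estimate for $C^2$ functions. First I would invoke the standard bias--variance decomposition
\begin{align*}
\mse\left\{\hspdmt(\xi)\right\} = \var\left\{\hspdmt(\xi)\right\} + \abs{\spd(\xi) - \spd*\win(\xi)}^2,
\end{align*}
which holds because $\mathbb{E}\{\hspdmt(\xi)\} = \spd*\win(\xi)$ by the computation behind \eqref{eq_thom_sw}. The variance is immediately controlled by Proposition \ref{prop_mt_general} as $\var\{\hspdmt(\xi)\} \lesssim \norm{\spd}_\infty^2/K \leq \norm{\spd}_{C^2}^2/K$, supplying the final $1/K$ summand. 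I would also record at the outset that the hypothesis $K \pero^{d/(d-1)} \geq \caro$ is exactly the requirement $W^{d-1}\pero \geq 1$ of Theorem \ref{th_specwin}, once $W = (K/\caro)^{1/d}$ is substituted.

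The core of the argument is the bias $\abs{\spd - \spd*\win}$. Writing $g_W := W^{-d} 1_{[-W/2,W/2]^d}$ for the normalized box (so $\int g_W = 1$, matching the normalization $\int \win = 1$), I would split
\begin{align*}
\abs{\spd(\xi) - \spd*\win(\xi)} \leq \abs{\spd(\xi) - \spd*g_W(\xi)} + \abs{\spd*(g_W - \win)(\xi)}.
\end{align*}
The second, taper-imperfection term is dispatched by Young's inequality together with Theorem \ref{th_specwin}:
\begin{align*}
\abs{\spd*(g_W-\win)(\xi)} \leq \norm{\spd}_\infty \norm{g_W - \win}_{1} \lesssim \norm{\spd}_{C^2} \frac{\pero W^{d-1}}{K}\left[1 + \log\left(\frac{\caro}{\pero}\right)\right].
\end{align*}
For the first, smoothing term I would Taylor-expand $\spd(\xi - y)$ to second order inside the average $\spd*g_W(\xi) = W^{-d}\int_{[-W/2,W/2]^d} \spd(\xi-y)\,dy$; the linear term integrates to zero since the box is symmetric about the origin, and the quadratic remainder is bounded using $\abs{y} \lesssim W$ on the box, giving $\abs{\spd(\xi) - \spd*g_W(\xi)} \lesssim \norm{\spd}_{C^2}\, W^2$.

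Squaring the bias via $(a+b)^2 \lesssim a^2+b^2$, adding the variance, and substituting $W = (K/\caro)^{1/d}$ then produces the three summands of \eqref{eq_mse}: the term $W^4 = K^{4/d}/\caro^{4/d}$, the term $\pero^2 W^{2(d-1)}/K^2 = \pero^2/(\caro^{2-2/d}K^{2/d})$ carrying the squared logarithm, and the variance term $1/K$, all multiplied by $\norm{\spd}_{C^2}^2$. This is the routine but bookkeeping-heavy step, where one must check that the powers of $K$ and $\caro$ collapse exactly as stated (in particular that $W^{2(d-1)}/K^2$ contributes $K^{-2/d}\caro^{-(2-2/d)}$).

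For the simplified bound \eqref{eq_mse_simple} I would set $d=2$, take $K = \lceil \caro^{2/3}\rceil$ (equivalently $W = \caro^{-1/6}$), and evaluate the three terms separately: the first becomes $K^2/\caro^2 \asymp \caro^{-2/3}$, the variance term $1/K \asymp \caro^{-2/3}$, and the middle term $\pero^2/(\caro K)\cdot[\,\cdots\,]^2$; the fine-scale hypothesis $\pero \lesssim \sqrt{\caro}$ together with $\log(\caro/\pero) \leq \log \caro$ bounds it by $\caro^{-2/3}\log^2(\caro)$, which dominates and yields the claimed rate. I expect the main obstacle to be organizational rather than analytic: each estimate is either quoted or a short Taylor argument, but the decomposition must be arranged so that the $C^2$ regularity of $\spd$ is used only against the well-behaved box $g_W$, while the irregular geometry of $\sset$ enters solely through $\norm{g_W - \win}_1$ via Theorem \ref{th_specwin}. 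It is precisely this separation that makes the exponent algebra close and that identifies the middle term as the one balanced against $1/K$ by the choice $K = \lceil\caro^{2/3}\rceil$.
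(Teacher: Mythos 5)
Your proposal is correct and follows essentially the same route as the paper's proof: the identical splitting of the bias into a $C^2$-smoothing term against the normalized box (the paper's Taylor estimate \eqref{eq_reg}) plus a taper-imperfection term handled by Young's inequality \eqref{eq_loi} together with Theorem \ref{th_specwin}, then the bias--variance decomposition with the variance bound of Proposition \ref{prop_mt_general}, the substitution $W = (K/\caro)^{1/d}$, and the same evaluation of the three terms in the fine-scale regime for $d=2$. Your explicit observation that the hypothesis $K\pero^{d/(d-1)} \geq \caro$ is precisely $W^{d-1}\pero \geq 1$ is a detail the paper leaves implicit, but otherwise the two arguments coincide.
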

The new constant in
\eqref{eq_mse_simple} depends on the constant comparing $\pero$ and $\sqrt{\caro}$.
The proof of Theorem \ref{th_mse} is postponed to Appendix \ref{sec_estimates}.

\begin{rem}
For $d=1$, the choice $K = \lceil \caro^{4/5} \rceil$ in \eqref{eq_mse}
(or, equivalently, $W = \caro^{-1/5}$) gives
\begin{align}
\label{eq_mse_simple_1}
\mse \left\{\hspdmt (\xi)\right\} \le C \caro^{-4/5} \norm{\spd}^2_{C^2},
\end{align}
recovering the result in \cite{abro17}.
In this setting, the minimax risk corresponding to the stronger error measure given by the expected operator 
norm of the covariance matrix satisfies
\begin{align}
\label{eq_smm}
A n^{-4/5} \log(n)^{4/5}
\le \inf_{\widehat{\spd}} \sup_{\spd}
\mathbb{E} \left\{ \sup_\xi \left|\widehat{\spd}(\xi) - \spd(\xi)\right|^2 \right\} \le B n^{-4/5} \log(n)^{4/5},
\end{align}
for some $A, B > 0$.
Here, the infimum is taken among all estimators $\widehat{\spd}$ based on $n$ consecutive samples, and the first $\sup$ is over all spectral densities $\spd$ with H\"{o}lder exponent $2$ satisfying a certain smoothness bound, which determines the constants $A,B$ \cite{MR681465,MR3055254}; see also \cite{kabanava2017masked}.
We are unaware of benchmarks for the spectral estimation problem related to two-dimensional acquisition domains. (Expressions for the MSE depending on the signal and noise power are however available, see, e.g. \cite{simons2006spherical, dahlen2008spectral, plattner2014potential}.)
\end{rem}

\begin{rem}
Theorem \ref{th_mse} follows by combining Proposition \ref{prop_mt_general} and Theorem \ref{th_specwin}
and thus provides individual estimates for the bias and variance of the multitaper estimator. A small variation of the proofs yields similar results for \emph{eigenvalue weighted estimators} -- see \cite[Theorem 2.2]{abro17}.
\end{rem}

\begin{rem}
The estimates leading to Theorem \ref{th_specwin} are also relevant in numerical analysis. For 
example Proposition \ref{prop_tr} below improves on \cite{mahu17}.
\end{rem}

\begin{rem}
The above results may be generalized to cover arbitrary frequency profiles for the tapers instead of squares 
$[-W/2, W/2]^d$.
For example, in applications where the spectral density does not display any particular anisotropy related to 
the axes, a disk might be a better choice. For simplicity we do not pursue such generalizations in the present work. 
\end{rem}

\begin{rem}[Shannon number]
Theorem \ref{th_mse} and the technical lemmas in Appendix \ref{sec_estimates}
provide non-asymptotic bounds for certain heuristic calculations concerning the so-called \emph{Shannon 
number} \cite{siwado11}. These involve the sum of the most significant eigenvalues of the spectral 
concentration problem for irregular domains, and are typically formulated in the large-scale asymptotic 
regime (see also \cite[Section 7]{sidawi06}).
\end{rem}

\section{Proxy Slepian tapers}
\label{sec_rt}

As discussed in Section \ref{sec_mt}, calculating the Slepian tapers for arbitrary domains is a difficult task due to the inherent instability of the underlying eigenproblem.
Indeed, the condition number for the calculation of a single eigenvector is inversely proportional to its distance to the rest of the spectrum---see, for example, \cite[Equation 3.45]{MR3396212} and \cite[Section 2.5]{MR3325822}---and this spectral gap is small because of the plateau spectral profile of $\mat$ for large $K$.
However, practitioners resort to such direct methods with remarkable results \cite{plsi14,sidawi06,siwado11,hasi12}.
One possible explanation is that, unless $K$ is small enough, a standard finite-precision eigenvalue routine may fail to compute the true Slepian tapers, but will still compute an orthonormal basis for their linear span.
(See also \cite{plattner2014potential} for examples where standard eigenvalue routines succeed or fail, and a discussion of possible workarounds.)
As we shall see below, however, these tapers still yield the desired multitaper estimator, since the latter only depends on the tapers through their span.
\begin{proposition}
\label{prop_span}
Let $\{\newm_0, \ldots, \newm_{L-1}\}$ and $\{\widetilde \newm_0, \ldots, \widetilde \newm_{L-1}\}$ be two orthonormal sets that span the same linear space $V \subseteq \ell^2(\mathbb{Z}^d)$.
Then the corresponding multitaper estimators coincide, that is,
\begin{align}
\frac{1}{L}\sum_{k=0}^{L-1} \hspd_{\newm_k}(\xi) = \frac{1}{L}\sum_{k=0}^{L-1} \hspd_{\widetilde \newm_k}(\xi) \qquad \xi \in [-1/2,1/2]^d.
\end{align}
\end{proposition}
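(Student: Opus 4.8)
The plan is to fix the frequency $\xi$ and a single realization of $\popr$, and to recognize the tapered periodogram $\hspd_m(\xi)$ as the squared modulus of a single linear functional evaluated at the taper $m$. Concretely, I would introduce the vector $u_\xi \in \ell^2(\mathbb{Z}^d)$ defined by $u_\xi[q] := \popr[q]\,\ee{q\xi}\,1_\sset(q)$, which depends on $\xi$ and on the realization but \emph{not} on the taper. Working with the Hermitian inner product $\langle a,b\rangle := \sum_{q} a[q]\,\overline{b[q]}$ on $\ell^2(\mathbb{Z}^d)$, and using that $\popr$ is real-valued so that $\overline{u_\xi[q]} = \popr[q]\,\e{q\xi}\,1_\sset(q)$, one gets $\langle m, u_\xi\rangle = \sum_{q\in\sset} m[q]\,\popr[q]\,\e{q\xi}$ for every $m$. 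Hence $\hspd_m(\xi) = \abs{\langle m, u_\xi\rangle}^2$. Since $u_\xi$ is supported on $\sset$, this identity holds for an arbitrary $m \in \ell^2(\mathbb{Z}^d)$, regardless of its support.

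Next I would invoke the basis-independence of orthogonal projection. Let $\Pi_V$ denote the orthogonal projection of $\ell^2(\mathbb{Z}^d)$ onto $V$. For any orthonormal basis $\{\newm_1,\dots,\newm_L\}$ of $V$ and any vector $u$, the projection formula $\Pi_V u = \sum_{k=1}^L \langle u, \newm_k\rangle \newm_k$ together with orthonormality yields Bessel's identity $\sum_{k=1}^L \abs{\langle \newm_k, u\rangle}^2 = \norm{\Pi_V u}^2$, where I used $\abs{\langle \newm_k,u\rangle} = \abs{\langle u,\newm_k\rangle}$. The right-hand side depends only on the subspace $V$ and on $u$, and not on the particular orthonormal basis chosen for $V$.

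Combining these two observations with $u = u_\xi$ gives, for each fixed $\xi$ and each realization,
\begin{align*}
\frac{1}{L}\sum_{k=1}^L \hspd_{\newm_k}(\xi)
&= \frac{1}{L}\sum_{k=1}^L \abs{\langle \newm_k, u_\xi\rangle}^2
= \frac{1}{L}\norm{\Pi_V u_\xi}^2 \\
&= \frac{1}{L}\sum_{k=1}^L \abs{\langle \widetilde\newm_k, u_\xi\rangle}^2
= \frac{1}{L}\sum_{k=1}^L \hspd_{\widetilde\newm_k}(\xi),
\end{align*}
where the middle equality uses that both $\{\newm_k\}$ and $\{\widetilde\newm_k\}$ are orthonormal bases of the \emph{same} space $V$. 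As this holds pointwise in $\xi$ and for every realization, the two multitaper estimators coincide as functions of $\xi$ (indeed as random fields), which is the assertion.

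I do not expect a genuine obstacle: the mathematical content is just that the orthonormal sum of squared coefficients reconstructs the squared norm of a projection, a basis-invariant quantity. The only points demanding care are the bookkeeping of complex conjugates when passing from $\hspd_m(\xi)$ to $\langle m, u_\xi\rangle$ (where the realness of $\popr$ is invoked), and the observation that the whole argument is pointwise in $\xi$ and per-realization, so that no measurability or interchange-of-limits concern arises and the conclusion transfers verbatim to the random estimators.
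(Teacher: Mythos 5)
Your proof is correct, and it takes a genuinely different route from the paper's. The paper fixes an orthogonal change-of-basis matrix $U \in \mathbb{R}^{L\times L}$ with $\widetilde \newm_k = \sum_{k'} U[k,k']\,\newm_{k'}$, expands the squared modulus in \eqref{eq_mt} into a double sum over $q,q'$, and collapses the resulting triple sum using $U^\transp U = I$; it is an explicit index computation. You instead observe that $\hspd_m(\xi) = \abs{\ip{m}{u_\xi}}^2$ for the fixed data vector $u_\xi[q] = \popr[q]\,\ee{q\xi}\,1_\sset(q)$, so that the (unnormalized) multitaper sum is $\sum_{k}\abs{\ip{\newm_k}{u_\xi}}^2 = \norm{\Pi_V u_\xi}^2$ by Bessel's identity, manifestly basis-independent. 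Your argument buys two things: it is more conceptual, exhibiting the estimator as $\hspdmt(\xi) = \tfrac{1}{L}\norm{\Pi_V u_\xi}^2$, which makes the span-dependence self-evident rather than a consequence of cancellation; and it handles complex-valued tapers without modification, since the Hermitian inner product and unitary invariance are built in, whereas the paper's expansion $\abs{\sum_q \widetilde\newm_k[q]\popr[q]\e{q\xi}}^2 = \sum_{q,q'}\widetilde\newm_k[q]\widetilde\newm_k[q']\popr[q]\popr[q']\e{(q-q')\xi}$ implicitly assumes real tapers (no conjugates appear) and a real orthogonal $U$. The paper's computation, on the other hand, is entirely self-contained at the level of finite sums and requires no appeal to projection operators. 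Your bookkeeping of conjugates (using that $\popr$ is real to get $\overline{u_\xi[q]} = \popr[q]\,\e{q\xi}\,1_\sset(q)$) and the remark that everything is pointwise in $\xi$ and per realization are both handled correctly.
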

The proof is found in Appendix \ref{sec_estimates}.

While this explains the partial success of standard eigenvalue routines when applied to the ill-posed Slepian eigenproblem, the above result also indicates a more straightforward approach.
Instead of trying to solve the standard diagonalization of $\mat$---which may fail but still give usable tapers---we perform a block diagonalization.
In other words, we calculate a basis for the span of the top $K$ eigenvectors of $\mat$.
Indeed, there is a large spectral gap between the first $K$ eigenvalues the rest of the spectrum (which Theorem \ref{th_specwin} and the estimates in Appendix \ref{sec_estimates} validate to some extent).
As a result, computing the associated subspace is a well-posed problem \cite{st73,golub-vanloan}.

We propose to compute these \emph{proxy Slepian tapers} by applying a block power method to $\mat$.
These are then used to compute the multitaper spectral estimator for a given realization of a stationary process.
The resulting algorithm is presented as Algorithm \ref{algo:rt}.

\begin{algorithm}
\begin{algorithmic}
\Function{Estimate}{$\popr[q]$ for $q \in \sset$, $K$, $T$, $\xi$}
\State Let $W \gets \ceil{\caro \times W^d}$.
\State Draw a matrix $\randmat \in \mathbb{R}^{\sset \times K}$ with i.i.d. $\mathcal{N}(0, 1)$ elements.
\For{$t \gets 1, T$}
    \State Compute QR decomposition: $Q \cdot R = \mat \cdot \randmat$.
    \State Set $\randmat \gets Q$.
\EndFor
\State Let $\randm_0, \ldots, \randm_{K-1}$ be the columns of $\randmat$ and
\begin{align*}
\hspdr(\xi) := \frac{1}{K}\sum_{k=0}^{K-1} \hspd_{\randm_k}(\xi).
\end{align*}
\State Return $\hspdr(\xi)$.
\EndFunction
\end{algorithmic}
\caption{%
\label{algo:rt}
The proxy Slepian multitaper estimator.
}
\end{algorithm}

In exact arithmetic, the column space of $\randmat$ converges to the span of the Slepian tapers $\{m_0, \ldots, m_{K-1}\}$ as we increase $T$.
Consequently, by Proposition \ref{prop_span}, $\hspdr(\xi)$ converges to $\hspdmt(\xi)$.
Since the gap between the $K$th and the $(K+1)$th eigenvalues of $\mat$ is typically non-negligible \cite{zhu2018eigenvalue, zhu2017roast}, subspace convergence is relatively fast,
when errors are measured by the maximal singular value of the difference between the orthogonal projections onto the true and calculated subspaces (operator norm) \cite[Theorem 8.2.2]{golub-vanloan}.
Although convergence of the column space of $\randmat$ 
in operator norm is sufficient for accurate reproduction of the Slepian multitaper estimate, it is not necessary.
Indeed, the estimates in Section \ref{sec_estimates} indicate that the multitaper estimator is robust under small errors in the calculation of the span of the Slepian tapers, when measured by the \emph{average} of the singular values of the difference between the orthogonal projections onto the true and calculated subspaces (normalized trace norm). A moderate number of iterations $T$ may be sufficient to control such average and thus accurately calculate the multitaper estimator. Indeed, a single iteration $T = 1$ is often sufficient for many applications. For special domains, such as rectangles, precise non-asymptotic estimates on the spectrum of $\mat$ allow one to quantify the previous remarks, as in \cite{zhu2018eigenvalue, zhu2017roast, karnikbandlimited}.
We expect that a similar analysis of the complexity of Algorithm \ref{algo:rt} for general domains is also possible.

We also note that the only step in Algorithm \ref{algo:rt} that depends on the data is the last one.
As a result, we may precompute the proxy tapers $\randm_0, \ldots, \randm_{K-1}$ once and apply them to the data as many times as necessary.

\begin{rem}
The matrix $\mat$ is supported on $\sset \times \sset$; in Algorithm \ref{algo:rt} it is treated as an element of $\bR^{\sset \times \sset}$.
\end{rem}

\begin{rem}
Applying $\mat$ to a vector involves an insertion followed by a convolution and a truncation.
Insertion and truncation are diagonal operators, while the convolution is a Toeplitz operator that can be applied using a fast Fourier transform.
Multiplication of a vector by $\mat$ is thus achieved in $\mathcal{O}(\caro \log \caro)$ time.
\end{rem}

\begin{rem}
Several authors have proposed numerical recipes to produce tapers adapted to irregular acquisition domains, resorting for example to QR decompositions and Karhunen--Lo\`{e}ve expansions;  see, e.g.,
\cite{refs1,refs2,refs3}. We are unaware of corresponding performance results.
\end{rem}

\section{Numerical results}
\label{sec_nums}

To empirically evaluate the performance of the proposed proxy multitaper estimator, we perform a few numerical experiments on synthetic data.
First, we numerically validate the theoretical results of Theorems \ref{th_specwin} and \ref{th_mse}.
We then compare the proxy tapers to standard Slepian tapers on rectangular domains, where they are shown to perform similarly.
Since the proxy tapers are easily computed on arbitrary domains, we also demonstrate its behavior on the complement of a disk.
We also evaluate their mean squared error on single-particle cryo-EM data, for both synthetic and experimental images.

\subsection{Empirical error analysis}

\begin{figure}[t]
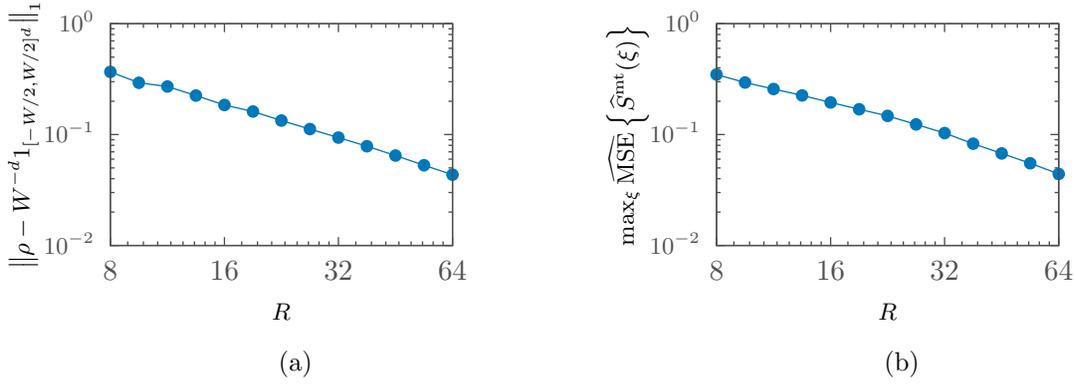

\centering
\begin{subfigure}{0.45\textwidth}
\input{gplt_scripts/rho1_single_gplt}
\caption{}
\label{fig:specwin_validation}
\end{subfigure}
\begin{subfigure}{0.45\textwidth}
\input{gplt_scripts/mse2_single_gplt}
\caption{}
\label{fig:mse_validation}
\end{subfigure}
\caption{
The evolution of spectral window error and spectral estimation error for different disks of radius $R$ within grids $\gridn$ of size $N = 128$ and $d = 2$.
(a) Spectral window error $\int_{[-1/2,1/2]^d} |\win(\xi) - W^{-d} 1_{[-W/2,W/2]^d}(\xi)| d\xi$ as a function of the radius $R$.
The bandwidth $W$ is $1/8$.
(b) The maximum estimated mean squared error $\widehat{\mse} \left\{ \hspdmt(\xi) \right\}$ over $\xi \in [-1/2,1/2]^d$ as a function of $R$.
The multitaper estimator $\hspdmt$ is calculated with $W = \caro^{-1/6}$ (see Theorem \ref{th_mse}) for a $1$-periodic $C^2$ spectral density $\spd$.
}
\end{figure}

To evaluate Theorem \ref{th_specwin}, we consider random fields on a grid $\gridn$ with $N = 128$ and $d = 2$.
The mask is a disk of radius $R$ given by $\sset = \{ q \in \gridn : \abs{q-(N/2, N/2)} < R\}$ and the bandwidth is fixed at $W = 1/4$.
For each radius, we then compute a set of proxy tapers $\randm_0, \ldots, \randm_{K-1}$ and calculate their spectral window $\rho$, which we compare to the ``ideal'' window $W^{-d} 1_{[-W/2,W/2]^d}$.
The result is shown in Figure \ref{fig:specwin_validation}.  
Computing the average slope in the logarithmic plot shows that the error decays approximately as $R^{-1.0}$.
This is close to the decay predicted by Theorem \ref{th_specwin}, whose leading term is $\pero W^{d-1} K^{-1} \approx \pero \caro^{-1} \, W^{-1}$ and is proportional to $R^{-1} \, W^{-1}$.

We use the same set of masks to evaluate Theorem \ref{th_mse}.
The tapers are generated as before, but with $W = \caro^{-1/6}$, as recommended in Theorem \ref{th_mse}.
We simulate $M = 200$ images $\popr_1, \ldots, \popr_M$ by sampling a random field with a $1$-periodic $C^2$ spectral density $\spd$ given by
\begin{equation}
\spd(\xi) = \left( 1_{|\xi| < \frac{1}{8}} * 1_{|\xi| < \frac{1}{8}} * 1_{|\xi| < \frac{1}{8}} \right)(\xi) = \int_0^\infty u^{-2} J_0(8|\xi| u) J_1(u)^3 du,
\end{equation}
where $J_\ell$ is the $\ell$th Bessel function.
Each image $\popr_\nu$ then yields a proxy multitaper spectral density estimate $\hspdr_\nu$, all of which are used to estimate the mean squared error as
\begin{equation}
\widehat{\mse}\left\{\hspdr(\xi)\right\} = \frac{1}{M} \sum_{\nu=1}^M \left| \hspdr_\nu(\xi) - \spd(\xi) \right|^2
\end{equation}
for all $\xi$ on an $N \times N$ grid.
These are then summarized by taking the maximum over $\xi$.
The result is shown, as a function of $R$, in Figure \ref{fig:mse_validation}.
Dividing the error by $\log^2 (\caro)$ and computing the average slope in the logarithmic plot, we obtain a decay of about $R^{-1.5}$.
Up to the logarithm factor and the norm of $\spd$, Theorem \ref{th_mse} gives an error bound of $\caro^{-2/3}$, which is proportional to $R^{-4/3}$ and therefore close to the empirical decay.

\subsection{Comparison with tensor Slepian tapers}
\label{sec_comp}
As discussed in Section \ref{sec_mt}, tensor products of Slepian functions may be used when $\sset$ is a subgrid of $\mathbb{Z}^d$.
To illustrate the performance of the proxy tapers, we therefore first compare them to standard tensor Slepian tapers defined on the subgrid shown in Figure \ref{fig:subgrid}.

\begin{figure}[t]
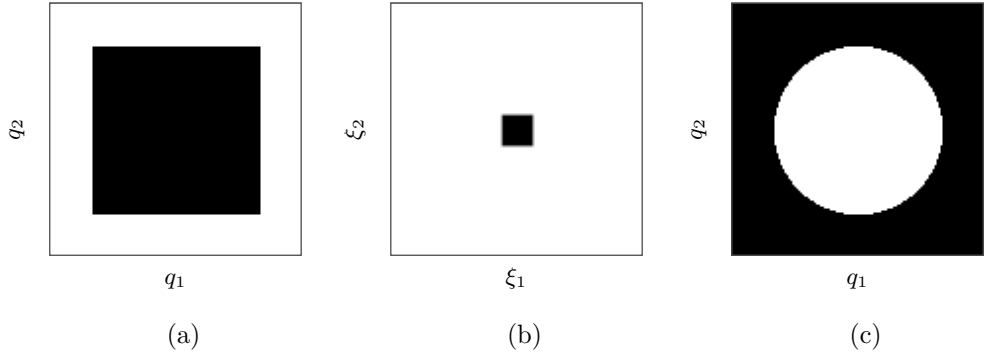

\centering
\begin{subfigure}{0.25\textwidth}
\input{gplt_scripts/recmask2_gplt}
\caption{}
\label{fig:subgrid}
\end{subfigure}
\begin{subfigure}{0.25\textwidth}
\input{gplt_scripts/mask1_gplt}
\caption{}
\label{fig:freqdomain}
\end{subfigure}
\begin{subfigure}{0.25\textwidth}
\input{gplt_scripts/mask2_gplt}
\caption{}
\label{fig:diskcompl}
\end{subfigure}
\caption{%
(a) A subgrid domain (black) of size $85$-by-$85$ within a larger domain (white) of size $128$-by-$128$.
(b) The target frequency profile (black) corresponding to $W = 1/8$.
(c) An irregular domain: the complement of a disk (black) with radius $43$ inside a $128$-by-$128$ square (white).
}
\end{figure}

The top row Figure \ref{fig:tenrectap} shows a few tensor Slepian tapers defined on the subgrid domain of Figure \ref{fig:subgrid} with target frequency profile given by \ref{fig:freqdomain}, which corresponds to $W = 1/8$.
Below are proxy tapers defined on the same subgrid with the same frequency profile and calculated using Algorithm \ref{algo:rt} for $T = 2$.
Both sets of tapers are properly supported on given domain, but their appearance is quite different.
Nonetheless, their accumulated spectral windows, shown in Figures \ref{fig:teninten} and \ref{fig:recinten}, both agree well with the target profile in Figure \ref{fig:freqdomain}.

To evaluate the performance of these tapers in a multitaper estimation setting, we generate a Gaussian process with spectral density given by Figure \ref{fig:density} on a $128$-by-$128$-pixel square.
The estimated density of the tensor Slepian multitaper estimator is given in Figure \ref{fig:tenmt} while that of the proxy tapers for $T = 2$ iterations is in Figure \ref{fig:recmt}.
Both agree quite well with the true density.
Indeed, the normalized root mean squared errors $\|\hspdmt-\spd\|/\|\spd\|$ and $\|\hspdr-\spd\|/\|\spd\|$ for the tensor Slepian tapers and proxy tapers are both approximately $1.67 \cdot 10^{-1}$.
The deviation between the two estimators is $\|\hspdmt-\hspdr\|/\|\hspdmt\| \approx 3.05 \cdot 10^{-3}$.
If we increase the number of iterations to $T = 72$, we obtain equality up to machine precision with $\|\hspdmt-\hspdr\|/\|\hspdmt\| \approx 5.42 \cdot 10^{-16}$.

\begin{figure}[t]
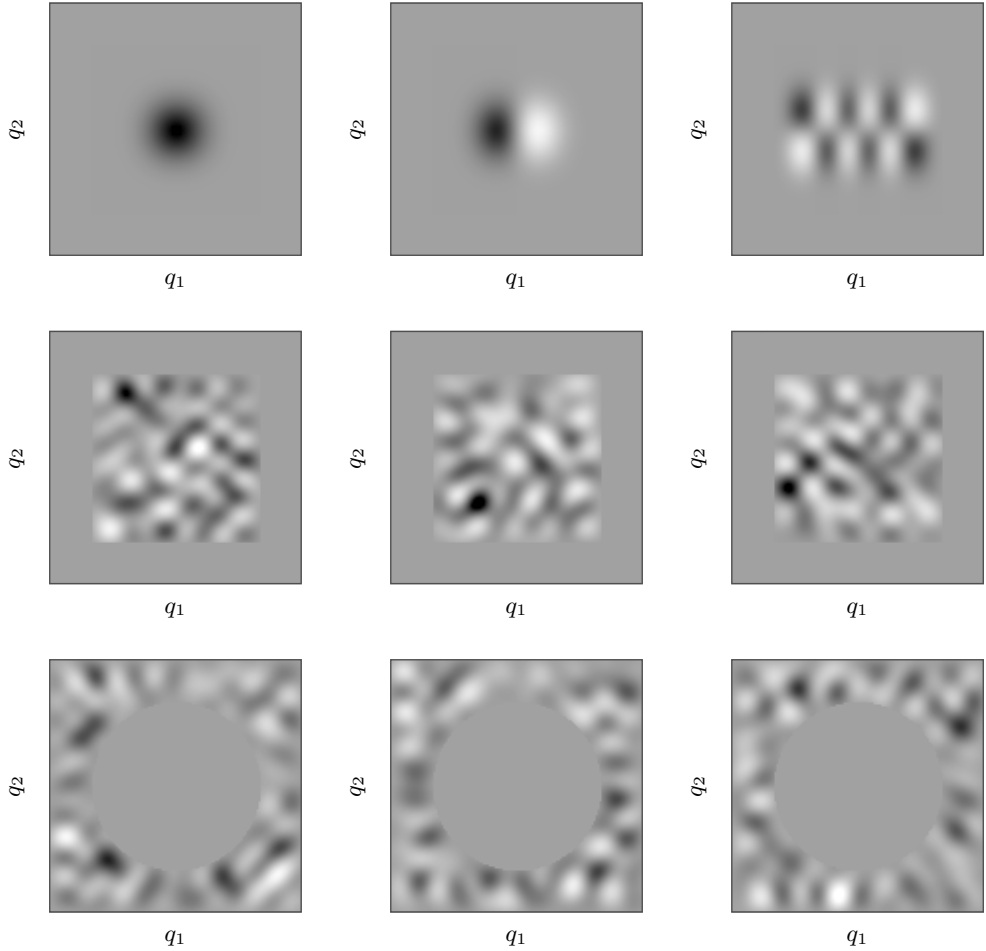

\centering
\begin{subfigure}{0.25\textwidth}
\input{gplt_scripts/tentap1_gplt}
\end{subfigure}
\begin{subfigure}{0.25\textwidth}
\input{gplt_scripts/tentap2_gplt}
\end{subfigure}
\begin{subfigure}{0.25\textwidth}
\input{gplt_scripts/tentap17_gplt}
\end{subfigure}

\begin{subfigure}{0.25\textwidth}
\input{gplt_scripts/rectap1_gplt}
\end{subfigure}
\begin{subfigure}{0.25\textwidth}
\input{gplt_scripts/rectap2_gplt}
\end{subfigure}
\begin{subfigure}{0.25\textwidth}
\input{gplt_scripts/rectap17_gplt}
\end{subfigure}

\begin{subfigure}{0.25\textwidth}
\input{gplt_scripts/tap1_gplt}
\end{subfigure}
\begin{subfigure}{0.25\textwidth}
\input{gplt_scripts/tap2_gplt}
\end{subfigure}
\begin{subfigure}{0.25\textwidth}
\input{gplt_scripts/tap17_gplt}
\end{subfigure}
\caption{(top row) Three tensor Slepian functions defined on the subgrid domain of Figure \ref{fig:subgrid}.
Three proxy tapers defined on (middle row) the domain of Figure \ref{fig:subgrid} and (bottom row) the disk complement domain of Figure \ref{fig:diskcompl}.}
\label{fig:tenrectap}
\end{figure}

\begin{figure}[t]
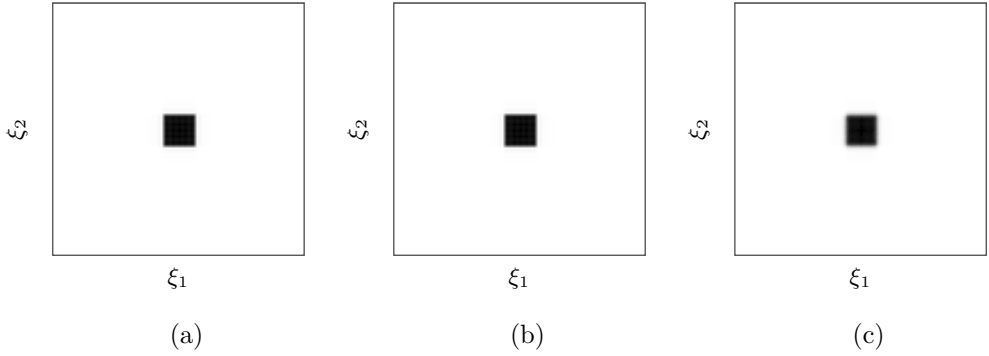

\centering
\begin{subfigure}{0.25\textwidth}
\input{gplt_scripts/teninten_gplt}
\caption{}
\label{fig:teninten}
\end{subfigure}
\begin{subfigure}{0.25\textwidth}
\input{gplt_scripts/recinten_gplt}
\caption{}
\label{fig:recinten}
\end{subfigure}
\begin{subfigure}{0.25\textwidth}
\input{gplt_scripts/inten_gplt}
\caption{}
\label{fig:inten}
\end{subfigure}
\caption{The accumulated spectral windows (black) of (a) the tensor Slepian tapers, (b) the proxy tapers for the subgrid domain of Figure \ref{fig:subgrid}, and (c) the proxy tapers for the irregular domain in Figure \ref{fig:diskcompl}.}
\end{figure}

\begin{figure}[t]
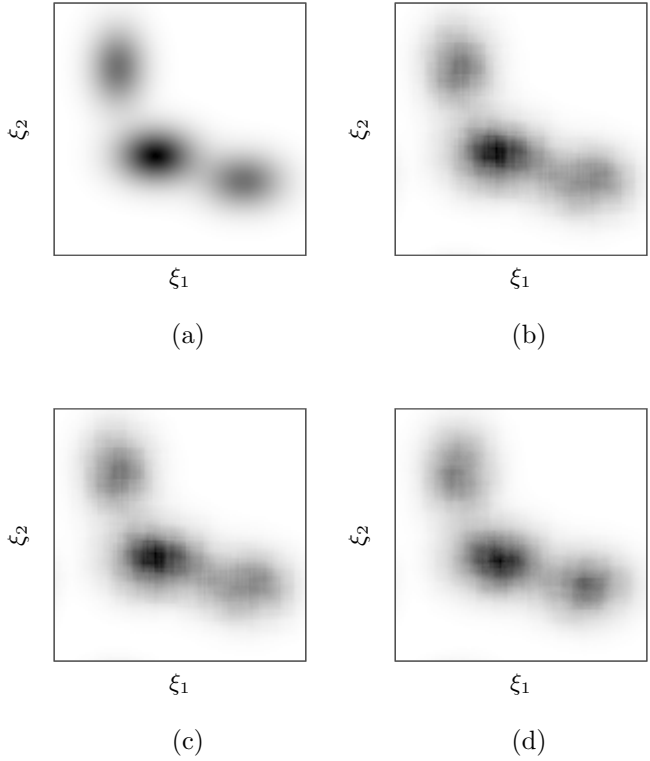

\centering
\begin{subfigure}{0.25\textwidth}
\input{gplt_scripts/density_gplt}
\caption{}
\label{fig:density}
\end{subfigure}
\begin{subfigure}{0.25\textwidth}
\input{gplt_scripts/tenmt_gplt}
\caption{}
\label{fig:tenmt}
\end{subfigure}

\begin{subfigure}{0.25\textwidth}
\input{gplt_scripts/recmt_gplt}
\caption{}
\label{fig:recmt}
\end{subfigure}
\begin{subfigure}{0.25\textwidth}
\input{gplt_scripts/mt_gplt}
\caption{}
\label{fig:mt}
\end{subfigure}
\caption{(a) The spectral density of a two-dimensional stochastic process. The density from a $128$-by-$128$ realization of the process using (b) tensor Slepian tapers defined on the subgrid of Figure \ref{fig:subgrid}, (c) proxy Slepian tapers defined on the same subgrid, and (d) proxy tapers defined on the disk complement of Figure \ref{fig:diskcompl}.}
\end{figure}

\subsection{Illustration for irregular domain}
We now replace the subgrid domain with the disk complement domain shown in Figure \ref{fig:diskcompl}.
Following the discussion of Section \ref{sec_mt}, it is computationally challenging to solve the eigenvalue \eqref{eq_eig} for the desired tapers (although standard eigenvalue solvers may still provide useful tapers, as observed in Section \ref{sec_rt}).
We can, however, compute proxy tapers over this domain using Algorithm \ref{algo:rt} with $T = 2$.
A few sample tapers are shown in the bottom row Figure \ref{fig:tenrectap}.
Again, their appearance is quite different from the Slepian tapers in the top row, but their accumulated spectral window shown in Figure \ref{fig:inten} agrees well with the target of Figure \ref{fig:freqdomain}.

Applying these tapers to estimate the spectral density of Figure \ref{fig:density} from one realization gives the density depicted in Figure \ref{fig:mt}.
The normalized root mean squared error is approximately $1.34 \cdot 10^{-1}$.

\subsection{Cryo-EM: Synthetic data}

\begin{figure}[ht]
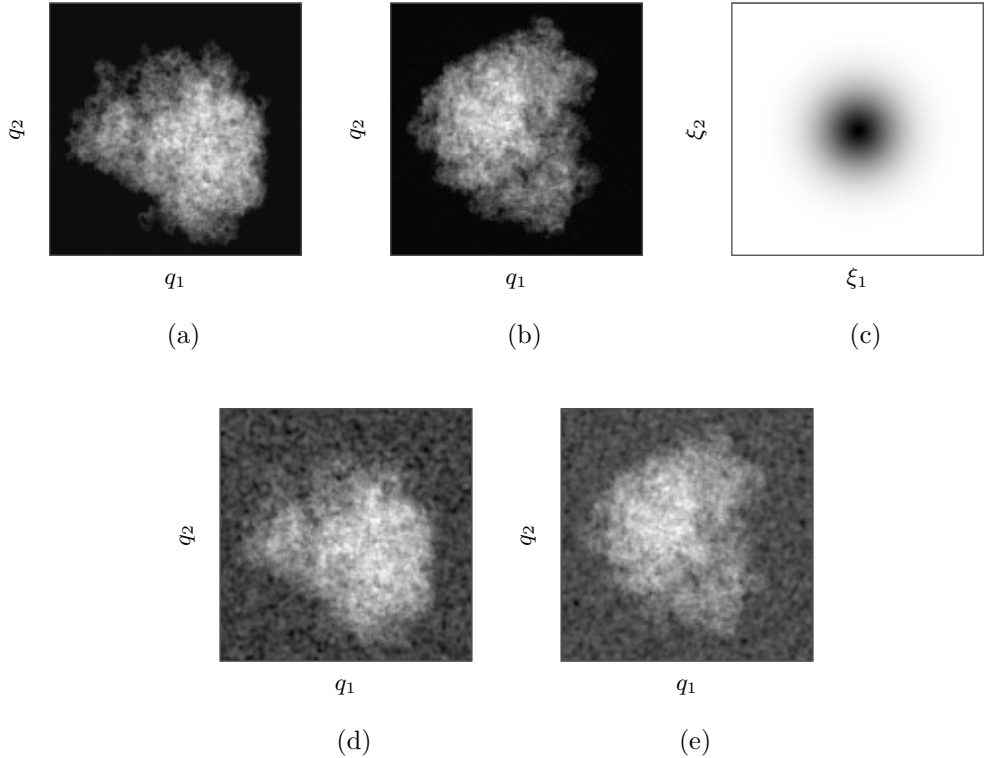

\centering
\begin{subfigure}{.25\textwidth}
\input{gplt_scripts/cryo_sim_sig1_gplt}%
\caption{}
\label{fig:cryo_sim_sig1}
\end{subfigure}
\begin{subfigure}{.25\textwidth}
\input{gplt_scripts/cryo_sim_sig2_gplt}%
\caption{}
\label{fig:cryo_sim_sig2}
\end{subfigure}
\begin{subfigure}{.25\textwidth}
\input{gplt_scripts/cryo_sim_psd_gplt}
\caption{}
\label{fig:cryo_sim_psd}
\end{subfigure}

\begin{subfigure}{.25\textwidth}
\input{gplt_scripts/cryo_sim_sig_noise1_gplt}%
\caption{}
\label{fig:cryo_sim_sig_noise1}
\end{subfigure}
\begin{subfigure}{.25\textwidth}
\input{gplt_scripts/cryo_sim_sig_noise2_gplt}%
\caption{}
\label{fig:cryo_sim_sig_noise2}
\end{subfigure}

\caption{%
Simulation of cryo-EM images.
(a,b) The clean projection images obtained from the density map of a 70S ribosome.
(c) The spectral density $\popr$ of the noise. (d,e) The projection images combined with noise generated using the spectral density.
}
\end{figure}

To evaluate our approach in a real-world application, we consider the estimation of noise power spectra in cryo-EM.
In cryo-EM imaging, a solution containing macromolecules of interest are frozen in a thin layer of vitreous ice which is then exposed to an electron beam.
A sensor records the transmitted electrons, resulting in a set of tomographic projections depicting the molecules from various viewing angles \cite{fr06}.
To reduce specimen damage, the electron dose is kept low, resulting in exceptionally noisy images, with noise power often exceeding that of the signal by a factor of ten or more.
Three-dimensional reconstruction of the molecules, the goal of cryo-EM, therefore requires a good characterization of the noise model.
This is especially important for methods which estimate the covariance structure of the underlying images \cite{bhzhsi16,ansi18}.
Since the noise characteristics vary with microscope configuration, ice thickness, and other experimental factors, we cannot rely on ensemble averages for low-variance estimation of the noise power spectrum.

To evaluate our approach for the cryo-EM application, we generate a number of projection images using a 70S ribosome density map on a grid with $N = 128$.
These are shown in Figures \ref{fig:cryo_sim_sig1} and \ref{fig:cryo_sim_sig2}.
We then generate Gaussian noise with a spectral density $\popr$ given in Figure \ref{fig:cryo_sim_psd}.
Although the noise in cryo-EM projections is dominated by Poisson-distributed shot noise \cite{bagr2009,vura2013}, the number of registered electrons is large enough that a Gaussian approximation suffices for our purposes.
The power spectral density was chosen to approximate those found in experimental datasets \cite{ansi17}, with high energy in the low frequencies that decays quickly to zero at high frequencies.
Adding the noise to the simulated projections, we obtain the images shown in Figure \ref{fig:cryo_sim_sig_noise1} and \ref{fig:cryo_sim_sig_noise2}.

\begin{figure}[ht]
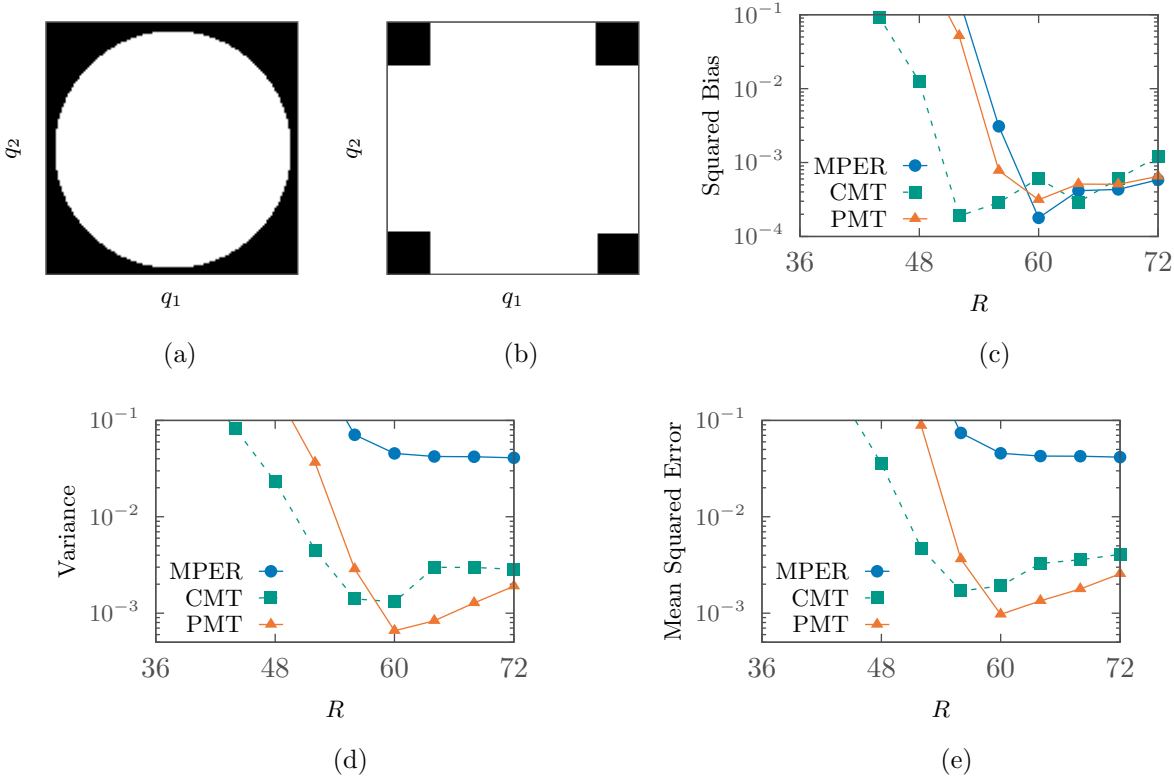

\centering
\begin{subfigure}{.25\textwidth}
\input{gplt_scripts/cryo_sim_mask_gplt}
\caption{}
\label{fig:cryo_sim_mask}
\end{subfigure}
\begin{subfigure}{.25\textwidth}
\input{gplt_scripts/cryo_sim_corner_mask_gplt}
\caption{}
\label{fig:cryo_sim_corner_mask}
\end{subfigure}
\begin{subfigure}{.45\textwidth}
\input{gplt_scripts/cryo_sim_bias_gplt}
\caption{}
\label{fig:cryo_sim_bias}
\end{subfigure}

\begin{subfigure}{.45\textwidth}
\input{gplt_scripts/cryo_sim_variance_gplt}
\caption{}
\label{fig:cryo_sim_variance}
\end{subfigure}
\begin{subfigure}{.45\textwidth}
\input{gplt_scripts/cryo_sim_mse_gplt}
\caption{}
\label{fig:cryo_sim_mse}
\end{subfigure}

\caption{
(a) The mask $\sset$ (black) for $R = 60$.
(b) The inscribed rectangular $\ssetgrids$ (black) for the same $R$.
(c) The squared bias of the masked periodogram (MPER), tensor multitaper on corners (CMT), and proxy multitaper (PMT) estimators.
(d) The variance of the estimators.
(e) The mean squared error of the estimators.
}
\end{figure}

Since the central disk of the images contains both the projected molecular density and the noise, we would like to estimate $\spd$ outside of this disk.
Specifically, we would like to restrict our estimator to a set $\sset$ of samples like the one shown in Figure \ref{fig:cryo_sim_mask}.
A common approach is to define the mask taper
\begin{equation*}
m[q] = \left\{ \begin{array}{ll} \caro^{-1/2}, & \mbox{if}~q \in \sset, \\ 0, & \mbox{otherwise} \end{array} \right.
\end{equation*}
and use \eqref{eq_mt} to calculate the tapered periodogram $\hspd_m(\xi)$ \cite{zhsi13,bhzhsi16}.
As only $K = 1$ taper is used, the result has high variance, so the estimates are averaged over a set of images to obtain an adequate estimate.
However, this fails to account for any variability in the noise models of the individual images.

A better estimator is obtained by replacing $\sset$ with a union of rectangular subgrids $\ssetgrids \subset \sset$
\begin{equation}
\label{eq_ssetgrids_def}
\ssetgrids := \bigcup_{a \in \{0, 1\}^2} \left\{ q \in \gridn : \abs{q_1 - Na_1} < \frac{N}{2}-\frac{R}{\sqrt{2}} \mbox{~and~} \abs{q_2 - Na_2} < \frac{N}{2} - \frac{R}{\sqrt{2}} \right\}.
\end{equation}
We then apply a standard tensor multitaper estimator to each subgrid and average the results.
These tapers are tensor products of one-dimensional Slepian sequences, as described in Section \ref{sec_mt}.
The $\ssetgrids$ corresponding to the $\sset$ of Figure \ref{fig:cryo_sim_mask} is shown in Figure \ref{fig:cryo_sim_corner_mask}.
Depending on the geometry of $\sset$, $\ssetgrids$ may discard many points in $\sset$, increasing variance of the estimate.
For small $K$, this also increases bias, as fewer points in this regime leads to a wider accumulated spectral window $\win$.

\paragraph{Performance}
We now compare the performance of these baseline estimators, the tapered periodogram $\hspd_m$ and the tensor multitaper estimator $\hspdmt$, to that of our proposed estimator $\hspdr$.
For a given $R$, we define $\sset$ by \eqref{eq_sset_def} and $\ssetgrids$ by \eqref{eq_ssetgrids_def}.
When $R$ is low, we expect a large bias as the samples are contaminated by the projected density maps, while increasing $R$ results in lower bias but higher variance due to the lower number of available samples.
We estimate the bias, variance, and MSE of the three estimators by computing them on $M = 1000$ synthetic images of size $N = 128$ generated as described in the beginning of the section.
For the tensor multitaper and proxy taper estimators, we set $W = 1/8$.
We plot the resulting bias, variance, and MSE estimates as a function of $R$ in Figures \ref{fig:cryo_sim_bias}, \ref{fig:cryo_sim_variance}, and \ref{fig:cryo_sim_mse}.

The bias of the proxy multitaper estimator is higher than the tensor multitaper for low $R$.
This is due to the original mask $\sset$ overlapping with the support of the projection images to a greater extent than $\ssetgrids$ at these radii.
As $R$ increases above $60$, however, the bias for both estimators drops down to the same level.
Since it has $K = 1$, no smoothness is imposed on the tapered periodogram which therefore has lower bias compared to the multitaper estimators.

At low $R$, the influence of the clean projection images yields high variance for all the estimators, since the images vary according to viewing angle.
The effect is exacerbated for the proxy multitaper estimators and the tapered periodogram since $\sset$ has greater overlap with the projections compared to $\ssetgrids$.
As $R$ increases, however, \emph{the proxy multitaper estimator $\hspdr$ enjoys a lower variance}, since it draws upon a larger number of samples compared to the tensor multitaper estimator $\hspdmt$, reducing the variance by a factor of two.
The tapered periodogram, meanwhile, has high variance for all $R$ since it only employs a single taper, providing no variance reduction.

The low bias and variance for high $R$ combine to yield a lower MSE for the proxy multitaper estimator $\hspdr$ compared to the other estimators.
On average, the proxy multitaper estimator gives an error a factor of $1.7$ lower than the tensor multitaper.

\begin{figure}[t]
\centering
\input{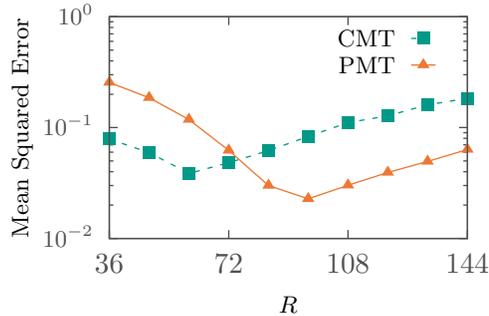}
\caption{
\label{fig:cryo_exp_mse}
    The mean squared errors (MSE's) of the masked periodogram (MPER), tensor multitaper on corners (CMT), and proxy multitaper (PMT) estimators on a subset of the EMPIAR-10028 cryo-EM dataset \cite{rib80s}.}
\end{figure}

\medskip

\subsection{Cryo-EM: Experimental data}
We now evaluate performance on images consisting of experimental projections of an 80S ribosome complex from the EMPIAR-10028 dataset \cite{rib80s}.
The images are defined on a grid with $N = 360$.
For our evaluation, we choose a subset of $M = 120$ images from the dataset and set $W = 1/16$.
Two sample images from this subset are shown in Figure \ref{fig:cryo_ex}.

To obtain a reasonable approximation of the true power spectrum for each projection image, we process the entire dataset through the RELION software package \cite{relion}.
This yields estimates of the underlying molecular density and allows us to simulate clean projection images for each of the noisy images.
By subtracting the estimated clean images from the noisy images, we obtain a set of images consisting mostly of noise.
We may then estimate their power spectra by applying a tensor multitaper estimator over the entire grid $\gridn$.
Since these estimates incorporate the noise in the center of the image, they should provide a good estimate of the spectral density of the noise in the whole image.
We shall therefore use these as a standard against which we compare the estimates obtained from the complement of the disk on the original projection images.

As before, we calculate the tapered periodogram $\hspd_m$ on $\sset$, the tensor multitaper estimator $\hspdmt$ on $\ssetgrids$, and the proxy multitaper estimator $\hspdr$ on $\sset$.
Each estimated power spectrum is compared to the ``ground truth'' power spectrum obtained for the corresponding image as described above, which gives an estimated MSE for each method.
Similarly to the simulation results, the error for the tapered periodogram $\hspd_m$ is dominated by the variance, resulting in an MSE of around $10$ largely independent of $R$.
The MSE's for the tensor multitaper and proxy multitaper estimators are plotted for each estimator in Figure \ref{fig:cryo_exp_mse}.

The tensor multitaper estimator performs well for low $R$, but increasing $R$ reduces the area of $\ssetgrids$, yielding higher variance and higher MSE.
A similar behavior is observed for the proxy multitaper estimates, but the reduction in area for $\sset$ is not as drastic for increasing $R$, so the variance remains small compared to the tensor multitaper estimator.
At $R \approx 96$, the error is minimized and the proxy multitaper outperforms the tensor multitaper by a factor of $1.7$.

\section{Conclusion}

We have analyzed the multitaper estimator on arbitrary acquisition domains, providing performance bounds on the mean squared error.
Furthermore, we show that the multitaper estimate only depends on the tapers through their linear span.
This explains the success of applying standard eigenvalue algorithms to the ill-posed Slepian eigenproblem, since a common mode of failure is for these to yield a set of vectors with the same span as the desired eigenvectors.
Using the resulting vectors as tapers therefore yields results close to those obtained with the true Slepian tapers.
We also present a more straightforward approach of calculating these proxy Slepian tapers using the block power method.
The performance of the resulting proxy multitaper estimator is shown to be comparable to that using Slepian tapers when these are available for rectangular domains.
We also illustrate the performance of the proxy multitaper estimator for more general domains and compared it to tensor Slepian multitaper estimators on rectangular subgrids.
Numerical results are obtained for both synthetic examples and on experimental data obtained from cryo-EM imaging.

Future directions of research involve adapting the factor analysis framework proposed for periodogram estimators \cite{ansi17} to multitaper estimators.
This would allow for greater variance reduction when a linear structure exists in the variability of spectral density between independent realizations of the random field.
Such a situation arises, for example, in the cryo-EM noise estimation task, where a set of underlying noise sources combine at arbitrary strengths to yield the noise process in a given image.

\section*{Acknowledgments}
The authors are very grateful to Lu\'is Daniel Abreu, Tomasz Hrycak, Frederik Simons, and Amit Singer, who motivated this article and provided valuable input.
They would also like to thank the anonymous reviewers for their helpful remarks.
J.~L.~R. gratefully acknowledges support from the Austrian Science Fund (FWF): P 29462 and Y 1199, and from the WWTF grant INSIGHT (MA16-053).
The Flatiron Institute is a division of the Simons Foundation.

\appendix

\section{Proofs}
\label{sec_estimates}

The $p$-norm of a vector $x \in \mathbb{R}^d$ is denoted $\abs{x}_p$. For a function $f \in \ell^1(\mathbb{Z}^d)$, we denote its $\ell^1$-norm $\sum_q |f[q]|$ by $|f|$.

For two non-negative functions $f, g: X \to [0, +\infty)$, we write $f \lesssim g$ if there exists a constant $C \geq 0$ such that $f \leq C g$. We also write $f \asymp g$, if $f \lesssim g$ and $g \lesssim f$.

Recall that a function $f: \mathbb{R}^d \to \mathbb{C}$ is \emph{1-periodic} if $f(x+k)=f(x)$ for all $k \in \mathbb{Z}^d$. The convolution of two 1-periodic functions $f, g$ is defined as
\begin{align}
\label{eq_conv}
f*g(x) = \int_{[-1/2,1/2]^d} f(y) g(x-y) dy.
\end{align}
The $L^\infty$-norm and $L^1$-norm of a measurable, 1-periodic function $f: \mathbb{R}^d \to \mathbb{C}$ are, respectively,
\begin{align*}
\norm{f}_{\infty} &:= \esssup_{x \in [-1/2,1/2]^d} \abs{f(x)},\\
\norm{f}_{1} &:= \int_{[-1/2,1/2]^d} \abs{f(x)} dx.
\end{align*}
For clarity, we sometimes write $\norm{f}_{L^\infty([-1/2,1/2]^d)}$ and $\norm{f}_{L^1([-1/2,1/2]^d)}$.

\subsection{Convolution estimates}
\label{sec_conv}

First, we note that
\begin{align}
\label{eq_loi}
\norm{f*g}_\infty \leq \norm{f}_\infty \norm{g}_{L^1([-1/2,1/2]^d)}.
\end{align}
Furthermore, for a parameter $W \in (0,1/2)$ we consider the periodic extension of the normalized characteristic function $W^{-d} 1_{[-W/2,W/2]^d}$, and, by a slight abuse of notation, we define its convolution with a 1-periodic function $f$ as
\begin{align*}
f * W^{-d} 1_{[-W/2,W/2]^d} (x) = \frac{1}{W^{d}} \int_{[-W/2,W/2]^d} f(x-y) dy.
\end{align*}
An estimate on a second-order Taylor expansion shows that
\begin{align}
\label{eq_reg}
\norm{f- \tfrac{1}{W^{d}} f * 1_{[-W/2,W/2]^d}}_{\infty}
\lesssim \norm{f}_{C^2} W^2.
\end{align}

\subsection{Trace and norm of the Toeplitz operators}
The estimates derived in this section are also relevant in numerical analysis of Fourier extensions.
In particular, they improve on the results in \cite{mahu17} by avoiding assumptions on the (digital) topology
of the set $\sset$. See also \cite{ni01} for related estimates.
\begin{proposition}
\label{prop_reg}
Let $a, b \in \ell^1(\Zdst)$ with $\sum_q a[q] = 1$. Then
\begin{align*}
\norm{a*b-b}_{\ell^1(\Zdst)} \leq \norm{\nabla b}_1
\sum_q \abs{q}_\infty \abs{a(q)},
\end{align*}
where
$\left(\nabla_k b\right)[q] := b[q+e_k]-b[q]$ for $k = 1, \ldots, d$,
$\norm{\nabla b}_1 = \sum_{k=1}^d \norm{\nabla_k b}_1$, and
$\abs{q}_\infty = \max\{\abs{q_1}, \ldots, \abs{q_d} \}$.
\end{proposition}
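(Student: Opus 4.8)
The plan is to exploit the normalization $\sum_q a[q] = 1$ to rewrite the error $a*b-b$ as a weighted superposition of translation increments of $b$, and then to control each increment by a discrete fundamental-theorem-of-calculus argument along the coordinate axes, using only the triangle inequality and the translation invariance of the $\ell^1$-norm.

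First I would write, for each $q \in \Zdst$,
\[
(a*b)[q] - b[q] = \sum_{p \in \Zdst} a[p]\, b[q-p] - b[q] = \sum_{p \in \Zdst} a[p]\,\big(b[q-p]-b[q]\big),
\]
where the last equality inserts $b[q] = \big(\sum_p a[p]\big) b[q]$. Since $a, b \in \ell^1(\Zdst)$, everything is absolutely summable, so summing $\abs{\cdot}$ over $q$ and applying the triangle inequality together with Tonelli gives
\[
\norm{a*b-b}_{\ell^1(\Zdst)} \leq \sum_{p \in \Zdst} \abs{a[p]} \sum_{q \in \Zdst} \abs{b[q-p]-b[q]}.
\]
It therefore suffices to bound, for each fixed shift $p$, the $\ell^1$-distance $\sum_q \abs{b[q-p]-b[q]}$ between $b$ and its translate by $p$.

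The core estimate is the claim $\sum_q \abs{b[q-p]-b[q]} \leq \abs{p}_\infty \norm{\nabla b}_1$. I would first prove the one-directional version: for an integer $m$ and a coordinate $k$, telescoping $b[q - m e_k] - b[q] = -\sum_{j=1}^{m}\nabla_k b[q - j e_k]$ (for $m>0$; the case $m<0$ is symmetric), taking absolute values, summing over $q$, and using translation invariance of the $\ell^1$-norm to collapse each inner sum to $\norm{\nabla_k b}_1$, yields
\[
\sum_q \abs{b[q - m e_k] - b[q]} \leq \abs{m}\,\norm{\nabla_k b}_1.
\]
For a general shift $p = (p_1, \ldots, p_d)$ I would factor the translation by $p$ as a composition of the $d$ axis-translations and insert a telescoping sum over the intermediate shifted copies of $b$. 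Since each intermediate function is itself a translate of $b$, its $k$-th difference has the same $\ell^1$-norm $\norm{\nabla_k b}_1$, so the triangle inequality gives
\[
\sum_q \abs{b[q-p]-b[q]} \leq \sum_{k=1}^d \abs{p_k}\,\norm{\nabla_k b}_1 \leq \abs{p}_\infty \sum_{k=1}^d \norm{\nabla_k b}_1 = \abs{p}_\infty \norm{\nabla b}_1,
\]
using $\abs{p_k} \leq \abs{p}_\infty$ in the middle step.

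Substituting this bound back into the earlier inequality for $\norm{a*b-b}_{\ell^1(\Zdst)}$ produces
\[
\norm{a*b-b}_{\ell^1(\Zdst)} \leq \norm{\nabla b}_1 \sum_{p \in \Zdst} \abs{p}_\infty \abs{a[p]},
\]
which is the asserted estimate. The only genuinely delicate point is the bookkeeping in the multidimensional telescoping: one must fix an order for the axis-translations and track the intermediate shifts so that the $k$-th increment is always measured against $\nabla_k b$ rather than some mixed difference. Translation invariance of $\ell^1$ renders this bookkeeping harmless, so beyond writing the composition cleanly I expect no substantive obstacle.
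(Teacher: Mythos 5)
Your proposal is correct and follows essentially the same route as the paper's proof: both use the normalization $\sum_q a[q]=1$ to express $a*b-b$ as a weighted sum of translation increments of $b$, telescope each shift coordinate-by-coordinate into unit steps measured by $\nabla_k b$, and invoke translation invariance of the $\ell^1$-norm together with $\abs{p_k}\leq\abs{p}_\infty$. The only difference is organizational: you isolate the shift bound $\sum_q \abs{b[q-p]-b[q]}\leq\abs{p}_\infty\norm{\nabla b}_1$ as a standalone lemma, whereas the paper carries out the same telescoping inside a single computation.
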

\begin{proof}
For $q \in \Zdst$ and $k\in\{1,\ldots,d\}$ we let $\chi_k q \in \Zdst$ be the truncated vector
$(\chi_k q)_j = q_j 1_{j \leq k}$, and also $\chi_0 q=0$. For $q' \in \Zdst$, let us write
\begin{align*}
&(a*b)[{q'}]-b[{q'}]=\sum_{q \in \Zdst} \left(b[{q+q'}]-b[{q'}]\right)a[{-q}]
\\
&\qquad=\sum_{q \in \Zdst} 
\sum_{k=1}^d
\left(b[{\chi_k q+q'}]-b[{\chi_{k-1}q+q'}]\right)a[{-q}]
\\
&\qquad=\sum_{q \in \Zdst} 
\sum_{k=1}^d
\left(b[{\chi_{k-1} q+q'+q_k e_k}]-b[{\chi_{k-1}q+q'}]\right)a[{-q}]
\\
&\qquad=\sum_{q \in \Zdst} 
\sum_{k=1}^d
\sum_{l=0}^{|q_k|-1}
\left(b[{\chi_{k-1} q+q'+\sgn(q_k)(l+1)e_k}]-b[{\chi_{k-1}q+q'+\sgn(q_k)l e_k}]\right)a[{-q}]
\\
&\qquad=\sum_{q \in \Zdst} 
\sum_{k=1}^d
\sum_{l=0}^{|q_k|-1}
\nabla_{k}b[{\chi_{k-1}q+q'+\sgn(q_k) l e_k}] a[{-q}].
\end{align*}
Therefore,
\begin{align*}
\norm{a*b-b}_1&\leq
\sum_{q \in \Zdst} 
\sum_{k=1}^d
\sum_{l=0}^{|q_k|-1}
\sum_{q' \in \Zdst}
\abs{\nabla_{k}b[{\chi_{k-1}q+q'+\sgn(q_k) l e_k}]} \abs{a[{-q}]}
\\
&\leq
\sum_{q \in \Zdst} 
\sum_{k=1}^d
\norm{\nabla_{k}b}_1 \abs{q_k} \abs{a[{-q}]}
\leq
\norm{\nabla b}_1
\sum_{q \in \Zdst} \abs{q}_\infty \abs{a[q]},
\end{align*}
as desired, since $|q_k| \le \abs{q}_\infty$.
(See \cite{ni01,abgrro16} for related estimates.)
\end{proof}

\begin{proposition}
\label{prop_tr}
Let $\mat$ be the matrix in \eqref{eq_mat}. Then
\begin{align}
\label{eq_tn}
\trace\left[\mat\right]-\trace\left[({\mat})^2\right]
\lesssim \pero W^{d-1}\left[ 1+ \log\left( \frac{\caro}{\pero} \right) \right].
\end{align}
\end{proposition}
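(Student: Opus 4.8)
The plan is to collapse the left-hand side of \eqref{eq_tn} into a single \emph{boundary sum} and then control that sum by the digital perimeter through a discrete telescoping estimate — the same mechanism that underlies Proposition \ref{prop_reg}, but kept local so as to avoid a divergence. Write $c[h] := W^d \sinc(Wh)$, so that, by \eqref{eq_mat}, $\mat[q,q']=1_\sset(q)\,c[q-q']\,1_\sset(q')$; note that $c$ is even and that $\sum_{h\in\Zdst}c[h]^2 = W^d$, since by Parseval the discrete-time Fourier transform of $c$ is $1_{[-W/2,W/2]^d}$ and $W<1/2$ precludes aliasing. Since $\mat$ is symmetric, a direct computation gives $\trace[\mat]=W^d\caro$ and $\trace[(\mat)^2]=\sum_{q,q'\in\sset}c[q-q']^2$. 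Subtracting, and using $\sum_{q\in\sset}\sum_{q'\in\Zdst}c[q-q']^2 = \caro\, W^d$, I obtain the exact identity
\[
\trace[\mat]-\trace[(\mat)^2] = \sum_{q\in\sset}\sum_{q'\notin\sset}c[q-q']^2 = \sum_{h\in\Zdst}c[h]^2\, N(h),\qquad N(h):=\#\{q\in\sset:\ q-h\notin\sset\}.
\]

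The core step is to bound the counting function $N(h)$. Trivially $N(h)\le\caro$. For the refined bound I would walk from $q-h$ to $q$ along a monotone lattice path, adjusting one coordinate at a time, and telescope $1_\sset(q)-1_\sset(q-h)$ into single-edge increments; since $q\in\sset$ and $q-h\notin\sset$ force at least one boundary crossing along the path, summing over $q$ and observing that, for each direction $k$ and each of the $|h_k|$ steps in that direction, the associated directed edge sweeps over all direction-$k$ edges exactly once, yields
\[
N(h)\le\sum_{k=1}^d |h_k|\,p_k\le |h|_\infty\,\pero,\qquad p_k:=\sum_{q}\abs{1_\sset(q+e_k)-1_\sset(q)},\quad \pero=\sum_{k=1}^d p_k.
\]
This is precisely the discrete telescoping behind Proposition \ref{prop_reg}, with $\pero=\norm{\nabla 1_\sset}_1$ playing the role of $\norm{\nabla b}_1$ and the factor $|h|_\infty$ playing the role of the first moment; keeping it local is what lets me also use $N(h)\le\caro$, and hence $N(h)\le\min(\caro,\pero\,|h|_\infty)$. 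This matters because $\sum_h |h|_\infty c[h]^2$ diverges, so a wholesale application of Proposition \ref{prop_reg} with $a=c^2/W^d$ and $b=1_\sset$ is useless.

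Finally I would split the sum at the scale $R_0:=\caro/\pero$, using $N(h)\le\pero|h|_\infty$ for $|h|_\infty\le R_0$ and $N(h)\le\caro$ for $|h|_\infty>R_0$. Everything then factors into one-dimensional sinc sums, for which I would record
\[
\sum_{n\in\Zst}\sinc(Wn)^2 = \tfrac1W,\quad \sum_{|n|\le M}|n|\,\sinc(Wn)^2\lesssim \tfrac1{W^{2}}\big[1+\log(MW)\big],\quad \sum_{|n|>M}\sinc(Wn)^2\lesssim \tfrac1{W^{2}M},
\]
each obtained by comparing $\sin^2(\pi Wn)/(\pi Wn)^2$ with $W^{-2}n^{-2}$ for $|n|\gtrsim 1/W$ and bounding the $O(1/W)$ small-$|n|$ terms. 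Tensorizing (multiplying the distinguished coordinate's bound by $W^{-(d-1)}$ from the remaining coordinates and using $|h|_\infty\le\sum_k|h_k|$), the near part contributes $\pero\,W^{2d}\cdot W^{-(d+1)}[1+\log(R_0W)]\lesssim \pero\,W^{d-1}[1+\log(\caro/\pero)]$, while the far part contributes $\caro\,W^{2d}\cdot W^{-(d+1)}R_0^{-1}=\pero\,W^{d-1}$; adding these gives \eqref{eq_tn}. The hard part will be the $N(h)$ estimate together with the logarithmic bookkeeping: one must keep the perimeter bound local rather than invoking Proposition \ref{prop_reg} directly, and choose the truncation radius $R_0=\caro/\pero$ so that the near- and far-field contributions balance and produce exactly the $\log(\caro/\pero)$ factor.
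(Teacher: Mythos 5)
Your proposal is correct, and it runs on the same engine as the paper's proof --- the trace computation $\trace[\mat]=W^d\caro$, a telescoping-along-lattice-paths argument that converts boundary crossings into the perimeter $\pero$, one-dimensional sinc sums producing the $1/W$, logarithmic, and tail factors, and a truncation at the scale $\caro/\pero$ --- but it organizes these ingredients differently. The paper bounds $\trace[\mat]-\trace[(\mat)^2]\le W^d\norm{(1_\sset * a)1_\sset-1_\sset}_1$ with $a=W^{-d}\abs{\hh}^2$, truncates $a$ at radius $L$, renormalizes to $b=\tilde a/\norm{\tilde a}_1$ so that Proposition \ref{prop_reg} (which needs total mass $1$) applies, and then tracks the truncation mass $\varepsilon$ and a constant $C_0$ before optimizing $L=\caro/\pero$. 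You instead keep the exact, manifestly nonnegative identity $\trace[\mat]-\trace[(\mat)^2]=\sum_h c[h]^2 N(h)$ and prove the pointwise bound $N(h)\le\min\big(\caro,\,\pero\,\abs{h}_\infty\big)$ by precisely the telescoping used \emph{inside} the proof of Proposition \ref{prop_reg} (your counting bound is that proposition specialized to $b=1_\sset$ and a point-mass kernel); the split at $R_0=\caro/\pero$ then replaces both the renormalization bookkeeping and the optimization over $L$. Your diagnosis of why Proposition \ref{prop_reg} cannot be applied wholesale --- the first moment $\sum_h\abs{h}_\infty c[h]^2$ diverges --- is exactly the reason the paper truncates. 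What your route buys: no normalization error terms, and a left-hand side that is visibly a boundary sum, which makes the $\min(\caro,\pero\abs{h}_\infty)$ mechanism transparent. What the paper's route buys: Proposition \ref{prop_reg} stands as a reusable general lemma (relevant beyond this proposition, cf.\ the improvement over \cite{mahu17}), with the trace bound falling out as a corollary. One small point to patch: your one-dimensional estimate $\sum_{\abs{n}\le M}\abs{n}\sinc(Wn)^2\lesssim W^{-2}\big[1+\log(MW)\big]$ is only meaningful for $MW\gtrsim 1$; in the regime $WR_0=W\caro/\pero\lesssim 1$ you should bound the near part directly by $\pero W^{d-1}(WR_0)^2\lesssim\pero W^{d-1}$, which is the analogue of the paper's separate treatment of the degenerate case $W\caro/\pero\le C_0$.
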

\begin{proof}

\noindent \emph{Step 1. (Computations)}. The function
\begin{align}
\label{eq_h}
\hh[q] := \prod_{j=1}^d \frac{\sin (\pi W q_j)}{\pi q_j} = W^d \sinc_d(W q), \qquad q = (q_1, \ldots, q_d) \in \Zdst,
\end{align}
determines the Fourier series
\begin{align*}
\sum_{q \in \Zdst} \hh[q] \e{\ip{q}{\xi}}= 1_{[-W/2,W/2]^d}(\xi), \qquad \xi \in [-W/2, W/2]^d.
\end{align*}

Note that, in terms of $\hh$, \eqref{eq_mat} reads:
$\mat_{q,q'} = 1_\sset(q) \hh[q-q'] 1_\sset(q')$.
We first compute
\begin{align}
\trace \left[\mat\right]&=\sum_{q \in \sset} \hh[{q-q}] = \caro \hh_0 = \caro
\norm{1_{[-W/2,W/2]^d}}_1
\\ &=
\label{eq_trmat}
W^d \caro = W^d \sum_{q\in\Zdst} 1_\sset[q].
\end{align}
Second,
\begin{align}
\trace \left[({\mat})^2\right] &=
\sum_{q,q' \in \mathbb{Z}^d} 1_\sset[q] \abs{\hh[{q-q'}]}^2 1_\sset[q']
\\
\label{eq_trmat2}
&= W^d \sum_{q \in \mathbb{Z}^d}
\left(1_\sset * a\right)[q] 1_\sset[q],
\end{align}
where \[a[q] := W^{-d} \abs{\hh[q]}^2.\]

\noindent \emph{Step 2. (Truncation errors)}. Let $L>0$ and consider the function
\[\tilde{a}[q] := a[q] 1_{\abs{q} \leq L}.\]

We claim that, for $L \geq 2$,
\begin{align}
\label{eq_aaa}
&\norm{a-\tilde{a}}_1 = W^{-d} \sum_{q \in \Zdst, \abs{q} > L} \abs{\hh[q]}^2 \lesssim (WL)^{-1},
\\
\label{eq_bbb}
&\sum_{q \in \Zdst} \abs{q} \abs{\tilde{a}[q]} = W^{-d} \sum_{q \in \Zdst, \abs{q} \leq L} 
\abs{q}\abs{\hh[q]}^2
\lesssim \frac{\log L}{W}.
\end{align}
To show these estimates, we write $\hh[q] = \rr[q_1] \ldots \rr[q_d]$ with
\begin{align*}
\rr[q_1]=\frac{\sin (\pi W q_1)}{\pi q_1} = W \sinc (W q_1), \qquad q_1 \in \mathbb{Z}.
\end{align*}
We first note the following:
\begin{align*}
&\sum_{q_1 \in \mathbb{Z}} \abs{\rr[q_1]}^2 = \bignorm{1_{[-W/2,W/2]}}^2_{L^2([-1/2,1/2])} = W,
\\
&\sum_{q_1 \in \mathbb{Z}, \abs{q}>L} \abs{\rr[q_1]}^2
\lesssim \sum_{q_1 \in \mathbb{Z}, \abs{q_1}>L} \frac{1}{\abs{q_1}^2} \lesssim \frac{1}{L},
\\
&\sum_{q_1 \in \mathbb{Z}, \abs{q_1} \leq L} \abs{q_1} \abs{\rr[q_1]}^2
\lesssim \sum_{q_1 \in \mathbb{Z}, \abs{q_1} \leq L} \frac{1}{\abs{q_1}} \lesssim \log(L),
\qquad L \geq 2.
\end{align*}
To show \eqref{eq_aaa}, we exploit the fact that the $d$-ball of radius $L$ contains a $d$-cube of side $2 \cdot d^{-1/2} L$ and estimate
\begin{align*}
\sum_{q \in \Zdst, \abs{q} > L} \abs{\hh[q]}^2
&\leq \sum_{k=1}^d \sum_{\stackrel{q \in \Zdst}{\abs{q_k} > d^{-1/2} L}} \abs{\hh[q]}^2
\\
&= d \, \Big( \sum_{q_1} \abs{\rr[q_1]}^2 \Big)^{d-1}
\Big(\sum_{\abs{q_1}>d^{-1/2} L} \abs{\rr[q_1]}^2 \Big)
\\
&
\lesssim W^{d-1} L^{-1}.
\end{align*}
Similarly, for $L \geq 2$ 
\begin{align*}
\sum_{q \in \Zdst, \abs{q} \leq L} \abs{q}\abs{\hh[q]}^2
&\leq
\sum_{k=1}^d \sum_{q \in \Zdst, \abs{q} \leq L} \abs{q_k}\abs{\hh[q]}^2
\leq
\sum_{k=1}^d \sum_{q \in \Zdst, \abs{q_k} \leq L} \abs{q_k}\abs{\hh[q]}^2
\\
&= d \, \Big(\sum_{q_1\in\Zst} \abs{\rr[q_1]}^2 \Big)^{d-1}
\Big(\sum_{\abs{q_1} \leq L} \abs{q_1}\abs{\rr[q_1]}^2 \Big)
\lesssim W^{d-1} \log(L),
\end{align*}
which gives \eqref{eq_bbb} and establishes the remaining claim.

\noindent \emph{Step 3. (Final estimates)}. 
Let $b := \norm{\tilde{a}}_1^{-1} \tilde{a}$. Noting that $\sum_q b[q] = 1$,
we may combine \eqref{eq_trmat} with \eqref{eq_trmat2} and use Proposition \ref{prop_reg} to form the bound
\begin{align}
\nonumber
&\trace[\mat]-\trace[({\mat})^2] \leq W^d \norm{(1_\sset*a)1_\sset - 1_\sset}_1 \\
\nonumber
&\qquad \leq W^d \big( \norm{1_\sset*(a-b)1_\sset}_1 + \norm{(1_\sset*b)1_\sset - 1_\sset}_1\big) \\
\nonumber
&\qquad \leq W^d \big( \norm{1_\sset*(a-b)}_1 + \norm{1_\sset*b - 1_\sset}_1\big) \\
\nonumber
&\qquad \leq W^d
\left( \norm{1_\sset}_1 \norm{a-b}_1 + \norm{\nabla 1_\sset}_1 \sum_{q\in\Zdst} |q| |b[q]| \right) \\
\label{eq_tr_bound_1}
&\qquad = W^d \left( \caro \norm{a-b}_1 + \pero \sum_{q\in\Zdst} \abs{q} \abs{b[q]} \right),
\end{align}
where we have also used the fact that $\norm{1_\sset*(a-b)}_1 \leq \norm{1_\sset}_1 \norm{a-b}_1$.

Let $\varepsilon := \norm{a-\tilde{a}}_1$.
Since $\norm{a}_1 = 1$ and $\tilde{a}[q] \le a[q]$ for all $a \in \Zst$, we have $\varepsilon = 1 - \norm{\tilde{a}}_1$.
Then, by \eqref{eq_aaa}, $\varepsilon \lesssim (WL)^{-1}$. Hence, there exist a constant $C_0>2$ such that
$\varepsilon < 1/2$, if $WL \geq C_0$. Let us assume that for the moment that $WL \geq C_0$, 
so that $\varepsilon < 1/2$ and $\norm{\tilde{a}}_1 = 1 - \varepsilon > 1/2$, and 
estimate
\begin{align*}
\norm{a-b}_1 &\leq \norm{a-\tilde{a}}_1 + \norm{\tilde{a}-b}_1=
\varepsilon + \big(\norm{\tilde{a}}_1^{-1}-1\big) \norm{\tilde{a}}_1 
\\
&= \varepsilon + (1-\norm{\tilde{a}}_1)
= 2\varepsilon \lesssim (WL)^{-1}.
\end{align*}
Similarly, by \eqref{eq_bbb},
\begin{align*}
\sum_{q\in\Zdst} \abs{q} \abs{b[q]} = \norm{\tilde{a}}^{-1}_1
\sum_{q\in\Zdst} \abs{q} \abs{\tilde{a}[q]} \leq 2 
\sum_{q\in\Zdst} \abs{q} \abs{\tilde{a}[q]}
\lesssim W^{-1} \log(L).
\end{align*}
Substituting these estimates into \eqref{eq_tr_bound_1} gives
\begin{align*}
&\trace[\mat]-\trace[({\mat})^2]
\lesssim W^{d-1} \left(\caro L^{-1} + \pero \log L\right).
\end{align*}
The right-hand side is minimized at $L=\tfrac{\caro}{\pero}$, which yields \eqref{eq_tn}, provided that
$WL = W \tfrac{\caro}{\pero} \geq C_0$. On the other hand, if 
$W \tfrac{\caro}{\pero} \leq C_0$, \eqref{eq_tn} is trivially 
true because 
\[\trace [{\mat}]= W^d \caro = W^{d-1} W \caro \leq C_0 W^{d-1}\pero.\]

\end{proof}

\subsection{Expectation and variance of the tapered estimators}
\begin{proof}[Proof of Proposition \ref{prop_mt_general}]
The fact that $\win$ depends only on the linear span the tapers follows from the proof of Proposition \ref{prop_span} below.
The variance bound can be proved as in \cite[Theorem 2]{liro08}. Only the orthogonality of the tapers is 
important here (see also \cite[Chapter 3]{hola12}).
For the bias of a taper $m \in \ell^1(\Zdst)$,
a direct calculation yields:
\begin{align*}
\mathbb{E} \left\{ \hspd_{m}(\xi) \right\}
=
\left( \abs{M}^2 * \spd \right) (\xi),
\mbox{ where }
M(\xi) = \sum_{q \in \Zdst} m[q] \e{\ip{q}{\xi}}.
\end{align*}
By averaging this expression over all tapers, we then obtain the bias of the multitaper estimator.
\end{proof}

\subsection{Description of the spectral window}
\begin{proof}[Proof of Theorem \ref{th_specwin}]
We use the notation $M_k(\xi) = \sum_{q \in \Zdst} m_k[q] \e{\ip{q}{\xi}}$.

\noindent {\bf Step 1}. We use that $\{m_0, \ldots, m_{\caro-1}\}$ is an orthonormal basis
of $\ell^2(\Omega)$ and compute
\begin{align*}
\win(\xi) &= \frac{1}{K}\sum_{k=0}^{K-1} \abs{M_k(\xi)}^2 \leq
\frac{1}{K} \sum_{k=0}^{\caro-1} \abs{M_k(\xi)}^2
\\
    &=\frac{1}{K} \sum_{k=0}^{\caro-1} \abs{\sum_{q\in\sset} m_k[q] \e{\ip{q}{\xi}}}^2
=
\frac{1}{K} \sum_{q \in \Omega} \abs{\e{\ip{q}{\xi}}}^2=\frac{\caro}{K},
\end{align*}
since an orthonormal change of basis preserves the norm of the function.
We then note that
\begin{align*}
\lambdak =
\lambdak \norm{m_k}_2^2=
\sum_{q,q' \in \Zdst} \overline{m_k[q]} \mat[q,q'] m_k[q']
 = \int_{[-W/2,W/2]^d}\abs{M_k(\xi)}^2 d\xi.
\end{align*}
This lets us form the estimate
\begin{align*}
&\int_{[-W/2,W/2]^d} \abs{\win(\xi) - \frac{\caro}{K} 1_{[-W/2,W/2]^d}(\xi)} d\xi
\\
&\qquad
=
\frac{\caro}{K} \int_{[-W/2,W/2]^d} 1_{[-W/2,W/2]^d}(\xi) d\xi - \int_{[-W/2,W/2]^d} \win(\xi) d\xi
\\
&\qquad
=
\frac{\caro W^d}{K} - \frac{1}{K}\sum_{k=0}^{K-1} \int_{[-W/2,W/2]^d}\abs{M_k(\xi)}^2 d\xi
= \frac{\caro W^d}{K} - \frac{1}{K}\sum_{k=0}^{K-1} \lambdak
\leq 1-\frac{1}{K}\sum_{k=0}^{K-1} \lambdak.
\end{align*}
Similarly,
\begin{align*}
&\int_{[-1/2,1/2]^d\setminus[-W/2,W/2]^d} \abs{\win(\xi) - \frac{\caro}{K} 1_{[-W/2,W/2]^d}(\xi)} d\xi
\\
&\qquad
=
\int_{[-1/2,1/2]^d\setminus[-W/2,W/2]^d} \win(\xi) d\xi
\\
&\qquad
=
\frac{1}{K}\sum_{k=0}^{K-1}
\int_{[-1/2,1/2]^d\setminus[-W/2,W/2]^d}\abs{M_k(\xi)}^2 d\xi
\\
&\qquad
=
\frac{1}{K}\sum_{k=0}^{K-1}
\left( 1 - \int_{[-W/2,W/2]^d} \abs{M_k(\xi)}^2 d\xi \right)
= \frac{1}{K}\sum_{k=0}^{K-1} (1-\lambdak)
= 1-\frac{1}{K}\sum_{k=0}^{K-1} \lambdak.
\end{align*}
Hence,
\begin{align}
\label{eq_xx}
\bignorm{\win - \tfrac{\caro}{K} 1_{[-W/2,W/2]^d}}_1
\lesssim 1-\frac{1}{K}\sum_{k=0}^{K-1} \lambdak.
\end{align}

\noindent {\bf Step 2}.
Using \eqref{eq_xx}, we estimate,
\begin{align}
\nonumber
&\Bignorm{\win - \frac{1}{W^d} 1_{[-W/2,W/2]^d}}_{L^1([-1/2,1/2]^d)}
\\
\nonumber
&\qquad \leq
\Bignorm{\win - \frac{\caro}{K} 1_{[-W/2,W/2]^d}}_{L^1([-1/2,1/2]^d)}
+ \Bignorm{\left(\frac{1}{W^d} - \frac{\caro}{K} \right) 1_{[-W/2,W/2]^d}}_{L^1([-1/2,1/2]^d)}
\\
\label{eq_step2}
&\qquad =
\Bignorm{\win - \frac{\caro}{K} 1_{[-W/2,W/2]^d}}_{L^1([-1/2,1/2]^d)}
+ \frac{\abs{K-W^d\caro}}{K}
\lesssim 1-\frac{1}{K}\sum_{k=0}^{K-1} \lambdak
+\frac{1}{K}.
\end{align}

{\bf Step 3}. We now proceed as in \cite{abro17}:
\begin{align*}
&\trace[\mat]-\trace[({\mat})^2] = \trace[\mat(\mathrm{I} - \mat)] \\
&\qquad
= \sum_{k=0}^{\caro-1}\lambdak(1-\lambdak)
\\
&\qquad =\sum_{k=0}^{K-1}\lambdak(1-\lambdak)+\sum_{k=K}^{\caro-1}\lambda_{k}(1-\lambdak) \\
&\qquad \geq \lambda
_K\sum_{k=0}^{K-1}(1-\lambdak)+(1-\lambda_K)\sum_{k=K}^{\caro-1}\lambdak \\
&\qquad =\lambda_{K}
K-\lambda_{K}\sum_{k=0}^{K-1}\lambdak+\left(1-\lambda_{K}\right)
\left(W^d \caro-\sum_{k=0}^{K-1}\lambdak \right) \\
&\qquad =\lambda _{K}K+
W^d \caro
(1-\lambda _{K})-\sum_{k=0}^{K-1}\lambdak \\
&\qquad =
W^d \caro-\sum_{k=0}^{K-1}\lambdak+\lambda_{K}(K-W^d \caro) \\
&\qquad \geq K-\sum_{k=0}^{K-1}\lambdak-1.
\end{align*}
By Proposition \ref{prop_tr},
\begin{align*}
K-\sum_{k=0}^{K-1}\lambdak \lesssim
\pero W^{d-1}\left[ 1+ \log\left( \frac{\caro}{\pero} \right) \right].
\end{align*}
where constant vanishes since $\pero W^{d-1} \geq 1$.
Finally, we combine this with \eqref{eq_step2} to obtain \eqref{eq_con}.
\end{proof}

\subsection{Proof of Theorem \ref{th_mse}}
\begin{proof}
Using that $W \asymp \left(\tfrac{K}{\caro}\right)^{1/d}$, we 
invoke Theorem \ref{th_specwin}, \eqref{eq_reg} and \eqref{eq_loi}
to obtain
\begin{align*}
&\abs{\mathrm{Bias} \left\{ \hspdmt(\xi) \right\}} =
\abs{\spd(\xi)-\mathbb{E} \left\{ \hspdmt(\xi) \right\}}
\\
&\qquad \lesssim \abs{\spd(\xi) -
\left(\spd*\tfrac{1}{W^d}1_{[-W/2,W/2]^d}\right)(\xi)}
+ \abs{\spd*(\win-\tfrac{1}{W^d}1_{[-W/2,W/2]^d})(\xi)}
\\
&\qquad \lesssim \norm{\spd}_{C^2} W^2 + \norm{\spd}_\infty
\bignorm{\win-\tfrac{1}{W^d}1_{[-W/2,W/2]^d}}_{L^1([-W/2,W/2]^d)}
\\
&\qquad \lesssim \norm{\spd}_{C^2}
\left(
W^{2} +
\frac{\pero W^{d-1}}{K} \left[ 1+ \log\left( \frac{\caro}{\pero} \right) \right]
\right)
\\
&\qquad \lesssim \norm{\spd}_{C^2}
\left(
\frac{K^{2/d}}{\caro^{2/d}} +
\frac{\pero }{\caro^{1-1/d}K^{1/d}} \left[ 1+ \log\left( \frac{\caro}{\pero} \right) \right]
\right).
\end{align*}
Combining this with Proposition \ref{prop_mt_general}, we obtain
\begin{align*}
\mse \left\{\hspdmt (\xi)\right\} &= \left[\mathrm{Bias}
\left\{
\hspdmt(\xi) \right\} \right]^2 + \var \left\{\hspdmt \right\}(\xi)
\\
&\lesssim \norm{\spd}^2_{C^2}
\left(
\frac{K^{4/d}}{\caro^{4/d}} +
\frac{\pero^2 }{\caro^{2-2/d}K^{2/d}} \left[ 1+ \log\left( \frac{\caro}{\pero} \right) \right]^2
+ \frac{1}{K}
\right).
\end{align*}

Finally for $d=2$, if $K \approx \caro^{2/3}$, and
$\caro \leq \pero^2 \leq C \caro$, for some constant $C>0$, we obtain
\begin{align*}
\mse \left\{\hspdmt \right\}(\xi) &\lesssim
\left( \caro^{-2/3}+
\caro^{-2/3} \left[ 1+ \log\left( \caro^{1/2} \right) \right]^2
+ \caro^{-2/3} \right) \norm{\spd}^2_{C^2}
\\
&\lesssim
\caro^{-2/3} \log^2\left( \caro \right) \norm{\spd}^2_{C^2}.
\end{align*}
\end{proof}

\subsection{Proof of Proposition \ref{prop_span}}
Let $U \in \mathbb{R}^{L \times L}$ be an orthogonal matrix such that
\begin{align*}
\widetilde \newm_k = \sum_{k'=0}^{L-1} U[k,k'] \newm_{k'},
\qquad k=0, \ldots, L-1.
\end{align*}
Using that $U^\transp U = I$, we compute
\begin{align*}
&\frac{1}{L}\sum_{k=0}^{L-1} \hspd_{\widetilde \newm_k}(\xi)
=
\frac{1}{L}\sum_{k=0}^{L-1} \abs{\sum_{q \in \Zdst} \widetilde \newm_k[q] \popr[q]
\e{\ip{q}{\xi}}}^2
=
\frac{1}{L}\sum_{k=0}^{L-1}
\sum_{q,q' \in \Zdst} \widetilde \newm_k[q] \widetilde \newm_k[q']
\popr[q] \popr[q'] \e{\ip{q-q'}{\xi}}
\\
&\qquad
=
\frac{1}{L}\sum_{k=0}^{L-1}
\sum_{k'=0}^{L-1}
\sum_{k''=0}^{L-1}
\sum_{q,q' \in \Zdst}
U[k,k'] U[k,k'']
 \newm_{k'}[q] \newm_{k''}[q']
\popr[q] \popr[q'] \e{\ip{q-q'}{\xi}}
\\
&\qquad
=
\frac{1}{L} \sum_{k'=0}^{L-1}
\sum_{k''=0}^{L-1}
\sum_{q,q' \in \Zdst}
\left(
\sum_{k=0}^{L-1}
{U^\transp[k'',k]} U[k,k'] \right)
 \newm_{k'}[q] \newm_{k''}[q']
\popr[q] \popr[q'] \e{\ip{q-q'}{\xi}}
\\
&\qquad
=
\frac{1}{L} \sum_{k'=0}^{L-1}
\sum_{q,q' \in \Zdst}
 \newm_{k'}[q] \newm_{k'}[q']
\popr[q] \popr[q'] \e{\ip{q-q'}{\xi}}
=
\frac{1}{L} \sum_{k'=0}^{L-1}
\abs{
\sum_{q\in \Zdst}
 \newm_{k'}[q] \popr[q] \e{\ip{q}{\xi}}}^2
\\
&\qquad
=\frac{1}{L}\sum_{k=0}^{L-1} \hspd_{\newm_k}(\xi),
\end{align*}
as claimed. \qed

\bibliographystyle{siamplain}
\bibliography{randtap}

\end{document}